\numberwithin{equation}{section}
\renewcommand{\AA}{\mathbb A}
\newcommand{\CC}{\mathbb C}
\newcommand{\FF}{\mathbb F}
\newcommand{\QQ}{\mathbb Q}
\newcommand{\ZZ}{\mathbb Z}
\newcommand{\OO}{\mathcal O}
\newcommand{\calE}{\mathcal E}
\newcommand{\scrS}{\mathscr S}
\newcommand{\ang}[1]{ \langle #1 \rangle  }
\def\un{{\operatorname{un}}}
\def\Gal{\operatorname{Gal}}
\def\ord{\operatorname{ord}} 
\def \GL {\operatorname{GL}}    
\def \PGL {\operatorname{PGL}}
\def \SL {\operatorname{SL}}
\def \rad {\operatorname{rad}}
\def\Aut{\operatorname{Aut}} 
\def\Frob{\operatorname{Frob}}
\def\tr{\operatorname{tr}}
\newcommand{\defi}[1]{\textsf{#1}} 
\def\bbar#1{\setbox0=\hbox{$#1$}\dimen0=.2\ht0 \kern\dimen0 
\overline{\kern-\dimen0 #1}}
\newcommand{\Qbar}{{\overline{\mathbb Q}}}
\newcommand{\FFbar}{\overline{\FF}} 
\newcommand\legendre[2]{{#1\overwithdelims () #2}}
\newtheorem{thm}{Theorem}[section]
\newtheorem{lemma}[thm]{Lemma}
\newtheorem{prop}[thm]{Proposition}
\newtheorem{conj}[thm]{Conjecture}
\theoremstyle{definition}
\theoremstyle{remark}
\newtheorem{remark}[thm]{Remark}
\newtheorem{example}[thm]{Example}
\newenvironment{romanenum}{\hfill \begin{enumerate} }{\end{enumerate}}
\newenvironment{alphenum}{\hfill \begin{enumerate} }{\end{enumerate}}
\definecolor{webcolor}{rgb}{0.8,0,0.2}
\definecolor{webbrown}{rgb}{.6,0,0}
\begin{document}
\title[On the surjectivity of mod $\ell$ representations associated to elliptic curves]{On the surjectivity of mod $\ell$ representations associated to elliptic curves}

\author{David Zywina}
\address{Department of Mathematics, Cornell University, Ithaca, NY 14853, USA}
\email{zywina@math.cornell.edu}
\urladdr{http://www.math.cornell.edu/~zywina}

\begin{abstract}
Let $E$ be an elliptic curve over the rationals that does not have complex multiplication.   For each prime $\ell$, the action of the absolute Galois group on the $\ell$-torsion points of $E$ can be given in terms of a Galois representation $\rho_{E,\ell}\colon \Gal(\Qbar/\QQ) \to \GL_2(\FF_\ell)$.   An important theorem of Serre says that $\rho_{E,\ell}$ is surjective for all sufficiently large $\ell$.   In this paper, we describe a simple algorithm based on Serre's proof that can quickly determine the finite set of primes $\ell$ for which $\rho_{E,\ell}$ is not surjective.   We will also give some improved bounds for Serre's theorem.
\end{abstract}

\subjclass[2010]{Primary 11G05; Secondary 11F80}

\maketitle

\section{Introduction} \label{S:intro}

Let $E$ be a non-CM elliptic curve defined over $\QQ$.   For each prime $\ell$, let $E[\ell]$ be the $\ell$-torsion subgroup of $E(\Qbar)$, where $\Qbar$ is a fixed algebraic closure of $\QQ$.   The group $E[\ell]$ is an $\FF_\ell$-vector space of dimension $2$ and there is a natural action of the absolute Galois group $\Gal_\QQ:= \Gal(\Qbar/\QQ)$ on $E[\ell]$ which respects the group structure.   After choosing a basis for $E[\ell]$, this action can be expressed in terms of a Galois representation
\[
\rho_{E,\ell} \colon \Gal_\QQ \to \GL_2(\FF_\ell).
\]
A renowned theorem of  Serre shows that $\rho_{E,\ell}$ is surjective for all sufficiently large primes $\ell$, cf.~\cite{MR0387283}.   

Let $c(E)$ be the smallest positive integer for which $\rho_{E,\ell}$ is surjective for all primes $\ell > c(E)$.  Serre has asked whether the constant $c(E)$ can be bounded independent of $E$ \cite{MR0387283}*{\S4.3}, and moreover whether $c(E)\leq 37$ always holds \cite{MR644559}*{p.~399}.  We pose a slightly  stronger conjecture; first define the set of pairs
\begin{align*}
\scrS:=\big\{\, (17, -17^2 \!\cdot\! 101^3/2), \,(17,-17\!\cdot\! 373^3/2^{17}),\, (37,-7\!\cdot\! 11^3),\, (37,-7\!\cdot\! 137^3\!\cdot\! 2083^3) \,\big\}.
\end{align*}
Denote by $j_E$ the $j$-invariant of $E/\QQ$.  When $(\ell, j_E)\in \scrS$, the curve $E$ has an isogeny of degree $\ell$ and hence $\rho_{E,\ell}$ is not surjective, cf.~\cite{Zywina-images} for a description of the image of $\rho_{E,\ell}$. 

\begin{conj} \label{C:new}
If $E$ is a non-CM elliptic curve over $\QQ$ and $\ell>13$ is a prime satisfying  $(\ell,j_E) \notin \scrS$, then  $\rho_{E,\ell}(\Gal_\QQ)=\GL_2(\FF_\ell)$.
\end{conj}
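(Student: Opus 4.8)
\emph{Strategy for a proof of Conjecture~\ref{C:new}.}
The plan is to run the four-case argument underlying Serre's theorem and to check that, under the hypotheses $\ell>13$ and $(\ell,j_E)\notin S_0$, none of the obstructions to surjectivity can occur. Fix such an $\ell$ and suppose $H:=\rho_{E,\ell}(\Gal_\QQ)$ is a proper subgroup of $\GL_2(\FF_\ell)$. Since $\det\circ\rho_{E,\ell}$ is the mod~$\ell$ cyclotomic character, it is surjective, so the classification of subgroups of $\GL_2(\FF_\ell)$ with surjective determinant (Dickson's theorem, as exploited by Serre; applicable here since $\ell\geq 5$) shows that $H$ lies in one of: (a) a Borel subgroup; (b) the normalizer of a split Cartan subgroup; (c) the normalizer of a nonsplit Cartan subgroup; or (d) a subgroup whose image in $\PGL_2(\FF_\ell)$ is isomorphic to $A_4$, $S_4$, or $A_5$. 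I would then dispose of the cases one at a time.

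Case (a) says exactly that $E$ has a $\QQ$-rational cyclic $\ell$-isogeny, i.e.\ a non-cuspidal point in $X_0(\ell)(\QQ)$. By Mazur's isogeny theorem together with its complements, for $\ell>13$ the non-cuspidal rational points of $X_0(\ell)$ come only from CM elliptic curves and, when $\ell\in\{17,37\}$, from the four $j$-invariants recorded in $S_0$; as $E$ is non-CM and $(\ell,j_E)\notin S_0$, case (a) is ruled out, and in fact this part of the statement is already a theorem. Case (d) I would exclude via Serre's analysis of the exceptional cases: an exceptional projective image forces both a congruence condition on $\ell$ (for instance $\ell\equiv\pm1\pmod 5$ in the $A_5$ case) and a rational point on one of the modular curves $X_{A_4}(\ell)$, $X_{S_4}(\ell)$, $X_{A_5}(\ell)$, whose known rational points yield no non-CM $E/\QQ$ once $\ell>13$. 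Case (b), a split Cartan, amounts to a non-cuspidal rational point on $X_{\mathrm{sp}}^+(\ell)$, and by the theorem of Bilu--Parent and Bilu--Parent--Rebolledo every rational point of $X_{\mathrm{sp}}^+(\ell)$ with $\ell>13$ is a cusp or a CM point, so case (b) does not arise.

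This leaves case (c), the normalizer of a nonsplit Cartan, which is equivalent to a non-cuspidal, non-CM rational point on the modular curve $X_{\mathrm{ns}}^+(\ell)$ for some prime $\ell>13$. This is the crux of the matter, and the reason the statement is posed as a conjecture rather than a theorem: for $\ell=13$ (excluded here) the relevant curve is the genus-$3$ ``cursed curve,'' whose rational points were eventually pinned down by quadratic Chabauty, but for general $\ell>13$ one has only Faltings's finiteness assertion, with no effective control and no proof that the rational points are all CM. A genuine proof would therefore need either a uniform resolution of the nonsplit Cartan case—an analogue of Mazur's method valid for all $\ell$ simultaneously—or a systematic way to push through quadratic (or higher) Chabauty together with a Mordell--Weil sieve across the whole family $\{X_{\mathrm{ns}}^+(\ell)\}_{\ell>13}$. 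I expect this to be by far the main obstacle; the other three cases are, by contrast, already settled in the literature.
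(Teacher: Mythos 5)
The statement you are asked to prove is labeled a \emph{conjecture} in the paper, and the paper does not claim a proof of it: there is no ``paper's own proof'' against which to measure your argument. What you have written is, correctly, an explanation of why the statement is expected to be true together with a precise identification of where current techniques stop. Your case split matches exactly the reduction the paper carries out in Proposition~\ref{P:at least 17}: surjective determinant plus Dickson's classification gives the four maximal possibilities; the Borel case is killed by Mazur (plus the two extra $X_0(17)$ and $X_0(37)$ points recorded in $S_0$); the split-Cartan-normalizer case is killed by Bilu--Parent--Rebolledo; the exceptional projective images are killed by Serre; and the nonsplit-Cartan-normalizer case survives. You also correctly identify that this surviving case is a genuine open problem (rational points on $X_{\mathrm{ns}}^+(\ell)$ for $\ell>13$), which is precisely why the paper proposes an algorithm and numerical verification instead of a proof. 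In that sense your assessment is accurate and matches the author's.

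One small inaccuracy worth flagging: you describe the exclusion of the exceptional case (d) as resting on congruence conditions together with rational points on the modular curves $X_{A_4}(\ell)$, $X_{S_4}(\ell)$, $X_{A_5}(\ell)$. That is one possible route, but the argument the paper actually cites (Serre, \cite{MR644559}*{\S8.4}) is a local argument at $\ell$: it uses the structure of the image of tame inertia at $\ell$ to show that for $\ell\geq 5$ the projective image cannot be $A_4$, $S_4$, or $A_5$ (with the $\ell\equiv\pm 3\pmod 8$ restriction already appearing in the list of possible maximal subgroups). No modular-curve input is needed for case (d). Also note that your case (a) needs the two extra non-CM points on $X_0(17)$ and $X_0(37)$ explicitly; these are exactly the pairs excluded by the hypothesis $(\ell,j_E)\notin S_0$, so the hypothesis is doing real work there and not merely tidying the statement. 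With those two adjustments your account aligns fully with what the paper establishes and with the reason the conjecture remains open.
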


The main goal of this paper is to give a simple and practical algorithm to compute the finite set of primes $\ell$ for which $\rho_{E,\ell}$ is not surjective.    We will focus on the case $\ell >13$ since using \cite{Zywina-images}, we can easily compute the group $\rho_{E,\ell}(\Gal_\QQ)$, up to conjugacy in $\GL_2(\FF_\ell)$, for all the primes $\ell \leq 13$.

We will also give  improved upper bounds for $c(E)$.

\subsection*{Notation}
For an elliptic curve $E/\QQ$, denote its $j$-invariant and conductor by $j_E$ and $N_E$, respectively.   For each prime $p$ for which $E$ has good reduction, define the integer $a_p(E)= |E(\FF_p)| - (p+1)$, where $E(\FF_p)$ is the $\FF_p$-points of a good model at $p$.  For each good prime $p\neq \ell$, the representation $\rho_{E,\ell}$ is unramified at $p$ and satisfies $\tr(\rho_{E,\ell}(\Frob_p))\equiv a_p(E)\pmod{\ell}$ and $\det(\rho_{E,\ell}(\Frob_p))\equiv p \pmod{\ell}$, where $\Frob_p \in \Gal_\QQ$ is an (arithmetic) Frobenius at $p$.    For primes $p$ for which $E$ has bad reduction, we set $a_p(E)=0$, $1$ or $-1$, if $E$ has additive, split multiplicative or non-split multiplicative reduction, respectively, at $p$.    Let $v_p \colon \QQ_p^\times \twoheadrightarrow \ZZ$ be the valuation for the prime $p$.

\subsection{An algorithm} \label{SS:method}

Fix a non-CM elliptic curve $E/\QQ$.    We now explain how to compute a finite set $S$ of primes such that $\rho_{E,\ell}$ is surjective for all primes $\ell \notin S$.\\

Let $q_1<\cdots < q_d$ be the primes $q$ that satisfy one of the following conditions:
\begin{itemize}
\item $q=2$ and $v_q(j_E)$ is $3$, $6$ or $9$,
\item $q\geq 3$ and $v_q(j_E-1728)$ is positive and odd.
\end{itemize}

We now consider odd primes $p$ for which $E$ has Kodaira symbol $\operatorname{I}_0$ or $\operatorname{I}^*_0$.  For such a prime $p$, $E/\QQ$ or its quadratic twist by $p$ has good reduction at $p$; denote this curve by $E_p/\QQ$.    

Let $p_1<p_2<p_3< p_4<\ldots$ be the primes satisfying the following conditions:
\begin{itemize}
\item
$p_i \nmid 2q_1\cdots q_d$,
\item
$E$ has Kodaira symbol $\operatorname{I}_0$ or $\operatorname{I}^*_0$  at $p_i$,
\item 
$a_{i}:=|a_{p_i}(E_{p_i})|$ is non-zero.  
\end{itemize}
Note that the set of primes $p_{i}$ has density $1$, cf.~\cite{MR644559}*{Th\'eor\`eme~20}.   
\\

For integers $i\geq 1$ and $1\leq j \leq d$, define the following values in $\FF_2$:
\[
\alpha_{i,j} = 
\begin{cases}
      0 & \text{if $q_j$ is a square modulo $p_i$}, \\
      1 & \text{otherwise},
\end{cases}
\quad \text{ and }\quad 
\beta_{i} = 
\begin{cases}
      0 & \text{if $-1$ is a square modulo $p_i$}, \\
      1 & \text{otherwise}.
\end{cases}
\]
It is easy to compute $\alpha_{i,j}$ and $\beta_i$; with respect to the isomorphism $\FF_2\cong \{\pm 1\}$, they are simply Legendre symbols.    For each integer $m\geq 1$, let $A_m \in M_{m,d}(\FF_2)$ be the $m\times d$ matrix whose $(i,j)$-th entry is $\alpha_{i,j}$ and let $b_m \in \FF_2^m$ be the column vector whose $i$-th entry is $\beta_i$.  

For $m$ large enough, the linear equation $A_m x = b_m$ has no solution.    Indeed, by Dirichlet's theorem for primes in arithmetic progressions,  there is an integer $i_0 \geq 1$ satisfying $\alpha_{i_0,j}=0$ for all $1\leq j \leq d$ and $\beta_{i_0} = 1$.    So $A_m x = b_m$ has no solutions for  $m\leq i_0$.

Let $r \geq 1$ be the smallest integer for which the linear equation $A_r x = b_r$ has no solution.      We define $S$ to be the set of primes $\ell$ such that $\ell \leq 13$, $(\ell,j_E) \in \scrS$, or $a_{i}\equiv 0 \pmod{\ell}$ for some $1\leq i \leq r$.   The set $S$ is finite since $\scrS$ is finite and each $a_{i}$ is non-zero.     We will prove the following in \S\ref{SS:proof of rank algorithm}.  

\begin{thm} \label{T:rank algorithm}
The representation $\rho_{E,\ell}$ is surjective for all primes $\ell\notin S$.
\end{thm}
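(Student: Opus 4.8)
The plan is to reduce surjectivity of $\rho_{E,\ell}$ to a short list of maximal subgroups of $\GL_2(\FF_\ell)$, following Serre's classification, and to rule out each possibility using the arithmetic data encoded in the primes $p_i$ and $q_j$. Fix a prime $\ell \notin S$; in particular $\ell > 13$, $(\ell, j_E) \notin S_0$, and $a_{p_i}(E_{p_i}) \not\equiv 0 \pmod \ell$ for all $1 \le i \le r$. Since $\ell > 13$, $\rho_{E,\ell}$ is surjective unless its image $H := \rho_{E,\ell}(\Gal_\QQ)$ is contained in a Borel subgroup (the reducible case), in the normalizer of a (split or nonsplit) Cartan subgroup, or its projective image is one of $A_4$, $S_4$, $A_5$ (the exceptional cases); this is Serre's theorem, together with the fact that the surjectivity fails for $\ell \in \{2,3,5,7,11,13\}$ only — which is why these are thrown into $S$. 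The determinant of $\rho_{E,\ell}$ is the mod-$\ell$ cyclotomic character, which is surjective onto $\FF_\ell^\times$, so we only ever need to exclude subgroups surjecting onto $\FF_\ell^\times$ via the determinant; this immediately kills the exceptional cases for $\ell > 13$ and handles the split Cartan (non-normalizer) and nonsplit Cartan cases as well, leaving essentially the Borel case, the normalizer-of-split-Cartan case, and the normalizer-of-nonsplit-Cartan case.

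For the Borel case: if $H$ lies in a Borel, then $E$ admits a $\QQ$-rational $\ell$-isogeny. By Mazur's theorem (and the refinement for non-CM curves), the only such $\ell > 13$ with $j_E$ not landing in a CM value are exactly the four $(\ell, j_E)$ in $S_0$ — which are excluded by hypothesis. So the Borel case cannot occur. The main work is the two Cartan-normalizer cases, and this is where the primes $p_i$, $q_j$ enter. The key point, due to Serre, is that if $H$ is contained in the normalizer of a Cartan subgroup $C$ but not in $C$ itself, then there is a quadratic character $\varepsilon\colon \Gal_\QQ \to \{\pm 1\}$ — namely the character giving the action on $H/(H\cap C)$ — such that for every good prime $p \neq \ell$ with $\varepsilon(\Frob_p) = 1$ one has (split Cartan) $p$ a square mod $\ell$, while if $\varepsilon(\Frob_p) = -1$ then $\rho_{E,\ell}(\Frob_p) \in C\sigma$ has trace zero, forcing $a_p(E) \equiv 0 \pmod \ell$; and in the nonsplit case the roles are intertwined, with $\varepsilon(\Frob_p)=1$ forcing $a_p(E)\equiv 0$ or $p$ to be a nonsquare, according to the precise Cartan. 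In all cases, $\varepsilon$ is ramified only at primes of bad reduction and at $\ell$, and a careful analysis of the conductor (via the valuations $v_p(j_E)$ and $v_p(j_E - 1728)$ governing the Kodaira type) shows $\varepsilon$ must be cut out by a quadratic field ramified only within $\{q_1,\dots,q_d,\ell\}$ — so $\varepsilon = \prod_{j\in T}\chi_{q_j}\cdot \chi_\ell^{e}$ for some subset $T\subseteq\{1,\dots,d\}$ and $e\in\{0,1\}$, where $\chi_p$ is the quadratic character associated to $p^* = (-1)^{(p-1)/2}p$.

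Now specialize at the primes $p_i$: since each $p_i$ has Kodaira type $\mathrm{I}_0$ or $\mathrm{I}_0^*$, $E$ has good reduction at $p_i$ after at most a quadratic twist by $p_i$, so $a_{p_i}(E) = \pm a_i$ or is controlled by $a_i$ in a way that keeps $a_{p_i}(E) \not\equiv 0\pmod\ell$ (here we use $a_i \not\equiv 0$ for $i\le r$), hence the trace-zero alternative is impossible at $p_i$. Therefore, in every Cartan-normalizer scenario, the sign condition $\varepsilon(\Frob_{p_i}) = (\text{fixed value})$ must hold for all $i \le r$; unwinding the definition of $\chi_{q_j}$ and $\chi_\ell$ at $p_i$ in terms of Legendre symbols, this says precisely $\sum_{j\in T}\alpha_{i,j} + e\,\beta_i = \beta_i$ (or $=0$), i.e. $A_r x = b_r$ has a solution $x = (T, e)$ — contradicting the minimality/choice of $r$. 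Hence no Cartan-normalizer case occurs either, and $\rho_{E,\ell}$ is surjective. The main obstacle is the bookkeeping in the paragraph above: pinning down exactly which quadratic characters $\varepsilon$ can arise — this requires the local analysis of $\rho_{E,\ell}$ at bad primes (relating ramification of $\varepsilon$ to $v_p(j_E)$ and $v_p(j_E-1728)$ mod the relevant integers), and checking that the congruence conditions on $a_{p_i}(E_{p_i})$ survive the quadratic twist cleanly. I would isolate this as a separate lemma before assembling the cases.
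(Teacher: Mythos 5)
Your outline captures the right \emph{shape} of the argument (classify maximal subgroups, build a quadratic character from the Cartan-normalizer situation, turn the $\mathrm{Frob}_{p_i}$ constraints into a linear system over $\FF_2$), but there are several genuine gaps where the paper has to do real work and your argument would not close.

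\textbf{The exceptional $S_4$ case is not killed by the determinant.} You assert that surjectivity of $\det\circ\rho_{E,\ell}$ immediately eliminates the $A_4$, $S_4$, $A_5$ projective-image cases. That is true for $A_4$ and $A_5$ (they are perfect, so the composite $H\to\FF_\ell^\times/\FF_\ell^{\times 2}$ is trivial), but $S_4$ has a sign character, and a subgroup $H\supseteq\FF_\ell^\times I$ with $H/\{\text{scalars}\}\cong S_4$ can perfectly well have $\det(H)=\FF_\ell^\times$. Ruling this case out is a separate theorem of Serre (\cite{MR644559}*{\S8.4}), which constrains $\ell\equiv\pm3\pmod 8$ and ultimately uses the image of inertia at $\ell$. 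The paper invokes this in Proposition~\ref{P:at least 17}; you cannot get it for free from the determinant.

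\textbf{The split Cartan normalizer case cannot be handled by the same arithmetic argument.} You propose to treat both Cartan-normalizer cases via the character $\varepsilon$ and the linear system. This will not work, and the obstruction is precisely why the paper cites Bilu--Parent--Rebolledo. The contradiction in \S\ref{SS:proof of rank algorithm} requires $\varepsilon_\ell$ to be \emph{odd} (so that $A_r x = b_r$, rather than $A_r x = 0$, has a solution; the latter always has the trivial solution and gives no contradiction). Oddness is proved in Lemma~\ref{L:varepsilon D} using that a non-split Cartan $C$ is cyclic, so its only elements of order $\le 2$ are $\pm I$, and $\rho_{E,\ell}(c)$ has determinant $-1$, hence lies outside $C$. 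For a split Cartan, $C$ contains $\operatorname{diag}(1,-1)$ of determinant $-1$, so complex conjugation can land inside $C$ and $\varepsilon_\ell$ can be even. Your argument then degenerates to an always-solvable system. The paper sidesteps this entirely by citing \cite{1104.4641} to exclude the split normalizer case outright for $\ell>13$.

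\textbf{The possible ramification of $\varepsilon_\ell$ at $\ell$ has to be ruled out, and you leave it in.} You allow $\varepsilon = \prod_{j\in T}\chi_{q_j}\cdot\chi_\ell^e$ with $e\in\{0,1\}$ and then claim this produces a solution of $A_r x = b_r$ with ``$x=(T,e)$''. But $A_r$ has only $d$ columns, one per $q_j$; there is no column for $\chi_\ell$, and if you augment the system with a column for $\chi_\ell$ (whose $i$-th entry is the Legendre symbol $\legendre{\ell^*}{p_i}$), the augmented system is in general solvable and yields no contradiction. The paper's Lemma~\ref{L:unramified}(\ref{L:unramified i}) proves $\varepsilon_\ell$ is \emph{unramified at $\ell$} (using the large tame image of $I_\ell$ to show $\rho_{E,\ell}(I_\ell)\subseteq C$), and it is exactly this lemma that removes $\chi_\ell$ and closes the gap. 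You flag the ramification analysis as ``a separate lemma'' but you have not noticed that without the unramified-at-$\ell$ conclusion the linear algebra collapses.

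\textbf{A minor but real point on the twist.} The paper handles the $E_{p_i}$-versus-$E$ issue cleanly by replacing $E$ at the outset with a quadratic twist having no odd primes of Kodaira type $\operatorname{I}_0^*$; this leaves $S$ and the surjectivity set unchanged and gives $a_i = |a_{p_i}(E)|$ directly. Your remark that the congruence conditions should ``survive the quadratic twist cleanly'' is pointing at the right worry, but the clean fix is to twist once globally rather than to reason prime-by-prime.

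In short: the route you sketch is Serre's route, but you have conflated two cases that are logically very different. The paper reduces everything to the non-split Cartan normalizer (Proposition~\ref{P:at least 17}, via Mazur, BPR, and Serre), and only then runs the character/linear-algebra argument, which relies on oddness (Lemma~\ref{L:varepsilon D}) and unramifiedness at $\ell$ (Lemma~\ref{L:unramified}). Without those two ingredients, the contradiction does not materialize.
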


We will explain in \S\ref{SS:small primes} how to test the surjectivity of $\rho_{E,\ell}$ for the finitely many primes $\ell\in S$.

There are earlier results that produce an explicit finite set $S$ that satisfies the conclusion of Theorem~\ref{T:rank algorithm}.  For example, the bounds of Kraus and Cojocaru mentioned in \S\ref{SS:some bounds} will give such sets $S$; however, the resulting sets $S$ can be extremely large and testing surjectivity of $\rho_{E,\ell}$ for the finite number of $\ell \in S$ can be time consuming.  Stein verified Conjecture~\ref{C:new} for curves of conductor at most $30000$ using the bound of Cojocaru, cf.~\cite{stein-web}; the resulting sets $S$ would typically consist of thousands of primes (this should be contrasted with Example~\ref{ex:cremona}).

\begin{example} \label{ex:cremona}
We have ran the above algorithm on all non-CM elliptic curves $E/\QQ$ with conductor at most $500000$ in Cremona's database \cite{cremona}.    For all such curves $E/\QQ$, we found that $p_r \leq 71$.   By the Hasse bound, we have $a_i  \leq 2\sqrt{p_i} \leq 2\sqrt{71} < 17$ for $1\leq i \leq r$.    So for each $\ell> 13$ with $(\ell,j_E)\notin \scrS$, we have $a_i \not\equiv 0 \pmod{\ell}$ for all $1\leq i \leq r$ and thus $\ell\notin S$.   This verifies Conjecture~\ref{C:new} for all non-CM elliptic curves $E/\QQ$ with conductor at most $500000$.  
\texttt{Magma} code, which also makes use of the sets from \S\ref{SS:non-integral}, can be found in Appendix~\ref{S:some code}. 
\end{example}

\begin{remark}
\begin{romanenum}
\item
Replacing $E$ by a quadratic twist, does not change the primes $q_j$, the primes $p_i$ or the integers $a_i$.  In particular, the set $S$ does not change if we replace $E$ by a quadratic twist and hence it depends only on $j_E$.   

In the above algorithm, one could also add the additional condition that $E$ has good reduction at each $p_i$.  The theorem still holds with the new resulting set $S$ (which need not only depend on $j_E$ anymore).
\item
In principle, the most time consuming part of computing $S$ is to determine the odd primes $p$ for which $v_p(j_E-1728)$ is positive and odd; note that the curve $E$ has bad reduction at such primes $p$.  However, observe that we do not need to determine all the primes of bad reduction.   This complements \S\ref{SS:non-integral}, where we find an alternate set $S$ when $j_E\notin \ZZ$ by only using the primes that divide the denominator of $j_E$.

\item
The linear equation $A_m x = b_m$ is equivalent to having $\sum_{j=1}^d \alpha_{i,j} x_j = \beta_i$ for all $1\leq i \leq  m$.   In the special case $d=0$, $r$ is the smallest positive integer for which $\beta_r \neq 0$.
\end{romanenum}
\end{remark}

\subsection{Non-integral $j$-invariants} \label{SS:non-integral}
Let  $E/\QQ$ be a non-CM elliptic curve. The following, which will be proved in \S\ref{SS:special denominators}, shows that if $\rho_{E,\ell}$ is not surjective, then the denominator of $j_E$ must be of a special form.

\begin{thm} \label{T:special den}
Let $p_1^{e_1}\cdots p_s^{e_s}$ be the factorization of the denominator of $j_E$, where the $p_i$ are distinct primes with $e_i>0$.  If $\rho_{E,\ell}$ is not surjective for a prime $\ell>13$ with $(\ell,j_E)\notin \scrS$, then each $p_i$ is congruent to $\pm1$ modulo $\ell$ and each $e_i$ is divisible by $\ell$.  
\end{thm}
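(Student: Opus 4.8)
The plan is to work $p_i$-adically, using the theory of the Tate curve. Fix one prime $p = p_i$ dividing the denominator of $j_E$; then $v_p(j_E) < 0$, so $E$ has potentially multiplicative reduction at $p$, and after replacing $E$ by its quadratic twist (which changes neither $j_E$ nor, by the first part of the Remark, the surjectivity of $\rho_{E,\ell}$ in the relevant sense — or one argues directly that a quadratic twist only affects $\rho_{E,\ell}$ by a quadratic character, hence does not affect whether the image is all of $\GL_2(\FF_\ell)$ for $\ell$ odd) we may assume $E$ has split multiplicative reduction at $p$. Over $\QQ_p$ we then have a Tate uniformization $E_{\QQ_p} \cong \CC_p^\times/q^\ZZ$ with $q \in p\ZZ_p$ and $v_p(q) = -v_p(j_E) = e_i > 0$. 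With respect to a suitable basis, the image of the decomposition group $\Gal(\Qbar_p/\QQ_p)$ under $\rho_{E,\ell}$ is contained in the Borel subgroup of upper-triangular matrices $\left(\begin{smallmatrix} \chi & * \\ 0 & 1\end{smallmatrix}\right)$, where the diagonal characters come from the action on $\mu_\ell$ and on $q^{1/\ell}$: concretely, inertia at $p$ acts on $E[\ell]$ through $\left(\begin{smallmatrix} \chi_\ell & * \\ 0 & 1 \end{smallmatrix}\right)$ where $\chi_\ell$ is the mod-$\ell$ cyclotomic character restricted to $I_p$, and the upper-right entry records the ramification of $\QQ_p(\mu_\ell, q^{1/\ell})/\QQ_p(\mu_\ell)$, which is nontrivial if and only if $\ell \nmid v_p(q) = e_i$.

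The key step is to combine this local structure with a classification of the possible images of $\rho_{E,\ell}$ when it is not surjective. For $\ell > 13$ and $(\ell,j_E)\notin S_0$, Serre's analysis (which the paper is building on) shows that a non-surjective image, up to conjugacy, is either contained in the normalizer of a Cartan subgroup (split or nonsplit) or, if it is contained in a Borel, forces $E$ to have a rational $\ell$-isogeny — but the known classification of rational $\ell$-isogenies over $\QQ$ for $\ell > 13$ leaves only the $j$-invariants appearing in $S_0$, which are excluded. So we may assume the image of $\rho_{E,\ell}$ lies in the normalizer $N$ of a Cartan subgroup $C$ but not in $C$ itself (the exceptional/projectively-exceptional cases $A_4, S_4, A_5$ are ruled out for $\ell > 13$ by order considerations, since $\ell \mid |\rho_{E,\ell}(\Gal_\QQ)|$ is impossible there, while $\ell$ divides the image order whenever there is wild ramification at $p$). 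Now I examine the image of the decomposition group at $p$ inside $N$. If $\ell \nmid e_i$, the inertia image has a nontrivial unipotent part, i.e.\ an element of order $\ell$; but $N$ has no element of order $\ell$ (the Cartan $C$ has order prime to... no — $C$ can have order divisible by $\ell$ only in the split case where $C \cong \FF_\ell^\times \times \FF_\ell^\times$, which has order prime to $\ell$; in the nonsplit case $C \cong \FF_{\ell^2}^\times$, also order prime to $\ell$; and $[N:C] = 2$), so $N$ contains no unipotent element $\neq 1$. This contradiction forces $\ell \mid e_i$.

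It remains to extract the congruence $p_i \equiv \pm 1 \pmod \ell$. Once we know $\ell \mid e_i$, the upper-right entry above is trivial, so the full decomposition group at $p$ maps into the diagonal torus $\left(\begin{smallmatrix} \chi & 0 \\ 0 & 1\end{smallmatrix}\right)$ times the unramified quotient; in particular the image of $\Frob_p$ (well-defined up to the inertia image, which is now inside the split Cartan $\left(\begin{smallmatrix} * & 0 \\ 0 & 1 \end{smallmatrix}\right)$) has eigenvalues, in some order, a unit $u$ and $u \cdot p$ reading off $\det = p$, and more precisely $a_p$-type data forces $\tr(\rho_{E,\ell}(\Frob_p)) \equiv \pm(p+1) \pmod \ell$ for split multiplicative reduction (this is the standard fact that the Tate parameter makes $\Frob_p$ act with eigenvalues $1$ and $p$, up to the sign twist). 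Since this Frobenius lies in $N$, it is either in $C$ or in the nontrivial coset; an element of the nontrivial coset of $N/C$ has trace $0$, giving $p \equiv -1 \pmod \ell$, while if $\Frob_p \in C$ we use that its eigenvalues are $1$ and $p$ (up to sign): in the split case both eigenvalues lie in $\FF_\ell$, automatically fine, but we must rule this out or push further — in fact the cleanest route is: the semisimplification of $\rho_{E,\ell}|_{I_p}$ is $\chi_\ell \oplus 1$, and for the image to lie in $N$ with $I_p$-image inside $C$, the two tame inertia characters $\chi_\ell, 1$ must be permuted by $N/C$ or each preserved; comparing with the known structure of tame inertia on $C$ pins down $p \equiv \pm 1 \pmod \ell$. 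I would write this last paragraph carefully, citing the Tate-curve computation of $\rho_{E,\ell}$ on the decomposition group and Serre's description of tame inertia in the Cartan cases.

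The main obstacle I anticipate is the bookkeeping in this final step: correctly tracking how the quadratic twist, the split-vs-nonsplit dichotomy for the Cartan, and the two cosets of $N/C$ interact, and making sure the sign ambiguity ($p \equiv +1$ vs.\ $p \equiv -1$) is handled uniformly rather than case-by-case. The input that a rational $\ell$-isogeny for $\ell>13$ forces $j_E$ into (essentially) $S_0$ is also essential and should be invoked precisely; everything else is a fairly mechanical application of the Tate parametrization together with Serre's image classification.
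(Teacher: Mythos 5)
Your approach via the Tate parametrization over $\QQ_p$ is the same as the paper's, and your argument for $\ell \mid e_i$ is correct and in fact a genuinely different (and slicker) route than the paper's: you observe that a nontrivial unipotent in the inertia image yields an element of order $\ell$, impossible in any Cartan normalizer, whereas the paper instead deduces $q^{1/\ell}\in\QQ_p(\zeta_\ell)$ and uses that $\QQ_p(\zeta_\ell)/\QQ_p$ is unramified. However, the proof of $p_i \equiv \pm 1 \pmod{\ell}$ has a real gap, and it is not the ``bookkeeping'' you flag at the end but the classification step. You leave open the possibility that the image of $\rho_{E,\ell}$ lies in the normalizer of a \emph{split} Cartan subgroup: you cite Mazur to rule out the Borel case, and you dismiss the exceptional subgroups (though your stated reason is off --- ramification at $p\neq\ell$ is tame, not wild; Serre rules out $\mathfrak{S}_4$ by a different argument), but split Cartan normalizers are simply not addressed. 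Excluding them requires the theorem of Bilu--Parent--Rebolledo, which the paper packages as Proposition~\ref{P:at least 17}. Without it your argument collapses: in a split Cartan, a matrix with eigenvalues $1$ and $p$ imposes no constraint whatsoever on $p$ modulo $\ell$.

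Once the Cartan is known to be non-split, the congruence falls out cleanly, and this also handles the case $p = \ell$, which your write-up does not address (note that your ``unipotent part nontrivial iff $\ell\nmid e_i$'' claim also tacitly assumes $p\neq\ell$, since $1+\ell\ZZ_\ell$ is not $\ell$-divisible). The paper's route: a non-scalar element of a non-split Cartan has no $\FF_\ell$-rational eigenvalue, so the only elements of $C$ with eigenvalue $\pm 1$ are $\pm I$; combined with the trace-zero condition on $N-C$, this forces the diagonal character $\alpha$ (the cyclotomic eigenvalue in the Tate-curve basis) to take values in $\{\pm 1\}$, which simultaneously rules out $p=\ell$ (otherwise $\alpha|_{\Gal_{\QQ_\ell}}$ would surject onto $\FF_\ell^\times$) and yields $p \equiv \alpha(\Frob_p) \equiv \pm 1 \pmod{\ell}$ directly. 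Your sketched semisimplification argument in the last paragraph cannot ``pin down'' the congruence without this non-split input.
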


Now suppose that the $j$-invariant of $E$ is not an integer (the theorem is trivial otherwise).  Let $g$ be the greatest common divisor of the integers $(p_i+1)(p_i-1)$ and $e_i$ with $1\leq i \leq s$.   Let $S$ be the set of primes $\ell$ such that $\ell \leq 13$, $(\ell,j_E) \in \scrS$, or $g\equiv 0 \pmod{\ell}$.     The set $S$ is finite. The following is a direct consequence of Theorem~\ref{T:special den}.

\begin{prop} \label{P:S non-integral}
If $j_E$ is not an integer, then the representation $\rho_{E,\ell}$ is surjective for all primes $\ell\notin S$.
\end{prop}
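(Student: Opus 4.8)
The plan is to derive this as an immediate consequence of Theorem~\ref{T:special den}, so the proof should be very short. Suppose $j_E$ is not an integer and let $\ell \notin S$ be a prime; we must show $\rho_{E,\ell}$ is surjective. Since $\ell \notin S$, we have $\ell > 13$, $(\ell,j_E)\notin S_0$, and $g \not\equiv 0 \pmod{\ell}$. Assume for contradiction that $\rho_{E,\ell}$ is not surjective. Then Theorem~\ref{T:special den} applies to the prime $\ell > 13$ with $(\ell,j_E)\notin S_0$: writing the denominator of $j_E$ as $p_1^{e_1}\cdots p_s^{e_s}$ with the $p_i$ distinct primes and $e_i > 0$ (note $s \geq 1$ since $j_E\notin\ZZ$), it tells us that $p_i \equiv \pm 1 \pmod{\ell}$ for every $i$, and that $\ell \mid e_i$ for every $i$.

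The first conclusion says $\ell \mid p_i^2 - 1$ for all $i$, and the second says $\ell \mid e_i$ for all $i$. Hence $\ell$ divides the greatest common divisor $g$ of the integers $\{p_i^2-1\}_{i=1}^s \cup \{e_i\}_{i=1}^s$, contradicting $g \not\equiv 0 \pmod{\ell}$. Therefore $\rho_{E,\ell}$ is surjective, which proves the proposition. One should also remark briefly why $S$ is finite, as the statement asserts: $S_0$ is a finite set, there are only finitely many primes $\ell \leq 13$, and since $j_E \notin \ZZ$ the integer $g$ is a well-defined positive integer (it is the gcd of a nonempty finite collection of nonzero integers — each $p_i^2 - 1 \geq 3$), so only finitely many primes divide $g$.

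There is essentially no obstacle here: all the substance lives in Theorem~\ref{T:special den}, and the role of Proposition~\ref{P:S non-integral} is just to repackage that theorem into a concrete finite set $S$. The only point requiring a moment's care is the edge case $j_E \in \ZZ$ (excluded by hypothesis, and anyway handled trivially since then the denominator is $1$ and Theorem~\ref{T:special den} is vacuous) and the verification that $g$ is genuinely positive so that "$g \equiv 0 \pmod \ell$" cuts out only finitely many $\ell$; both are immediate once one observes $s \geq 1$ and each $p_i \geq 2$.
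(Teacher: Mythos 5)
Your proof is correct and matches the paper exactly: the paper states Proposition~\ref{P:S non-integral} as ``a direct consequence of Theorem~\ref{T:special den}'' without spelling out the two-line deduction, which is precisely the contrapositive argument you give (if $\rho_{E,\ell}$ fails to be surjective with $\ell>13$ and $(\ell,j_E)\notin S_0$, then $\ell\mid p_i^2-1$ and $\ell\mid e_i$ for all $i$, hence $\ell\mid g$, contradicting $\ell\notin S$). Your side remarks on finiteness of $S$ and on the excluded case $j_E\in\ZZ$ are also consistent with the paper's definitions.
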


\begin{example}
We have verified Conjecture~\ref{C:new} for all non-CM elliptic curves $E/\QQ$ in the Stein-Watkins database (it consist of 136,924,520 elliptic curves  with conductor up to $10^8$).   Proposition~\ref{P:non-integral bounds} sufficed for all $E/\QQ$ with $j_E \notin \ZZ$, i.e., there were no primes $\ell \in S$ that needed to be checked individually.   The integral $j$-invariants that needed to be considered  were handled using the algorithm from \S\ref{SS:method}.
\end{example}

We now give some easy bounds for $c(E)$.

\begin{prop}  \label{P:non-integral bounds}
Suppose that $j_E$ is not an integer.
\begin{romanenum}
\item \label{P:non-integral bounds i}
We have $c(E)\leq \max\{17, g\}$.  
\item \label{P:non-integral bounds ii}
We have $c(E) \leq \max\{17, (p+1)/2\}$ for every prime $p$ with $v_p(j_E)<0$.
\item \label{P:non-integral bounds iii}
We have $c(E) \leq \max\{17, \log d\}$, where $d\geq 1$ is the denominator of $j_E$.
\end{romanenum}
\end{prop}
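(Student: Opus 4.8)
All three bounds follow directly from Theorem~\ref{T:special den}, which asserts that if $\rho_{E,\ell}$ is not surjective for a prime $\ell > 13$ with $(\ell, j_E)\notin S_0$, then every prime $p_i$ dividing the denominator of $j_E$ satisfies $p_i \equiv \pm 1 \pmod{\ell}$ and every exponent $e_i$ is divisible by $\ell$. So throughout I fix a non-CM curve $E/\QQ$ with $j_E\notin\ZZ$, and I must show that for primes $\ell$ strictly larger than each claimed bound, $\rho_{E,\ell}$ is surjective; by definition of $c(E)$ this gives the stated inequality. In each case I argue by contraposition: assume $\rho_{E,\ell}$ is not surjective and $\ell > 17$, so in particular $\ell > 13$; then I must also rule out $(\ell, j_E)\in S_0$, but that is automatic since every pair in $S_0$ has first coordinate $17$ or $37$ while the $j$-invariants appearing in $S_0$ are not the $j$-invariant of a curve with the relevant non-integral denominator structure—more simply, one checks the four explicit $j$-invariants in $S_0$ and sees they impose no constraint once $\ell > 17$ except $\ell = 37$, which is handled by noting those two $j$-invariants have denominators $2$ and $2^{17}$ respectively, whose only prime is $2$, and $2\not\equiv\pm1\pmod{37}$ unless we are in the excluded case; I will phrase this cleanly so the $S_0$ caveat evaporates under the hypothesis $\ell > 17$.

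For part~\eqref{P:non-integral bounds i}: suppose $\rho_{E,\ell}$ is not surjective with $\ell > \max\{17, g\}$. Then $\ell > 17 \geq 13$ and (dispensing with $S_0$ as above) Theorem~\ref{T:special den} applies. Hence $\ell \mid p_i^2 - 1$ for each $i$ (since $p_i \equiv \pm 1 \pmod \ell$) and $\ell \mid e_i$ for each $i$. Therefore $\ell$ divides $\gcd$ of all the $p_i^2 - 1$ and all the $e_i$, which is exactly $g$. But then $\ell \leq g$, contradicting $\ell > g$. This proves $c(E) \leq \max\{17, g\}$.

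For parts~\eqref{P:non-integral bounds ii} and~\eqref{P:non-integral bounds iii}: fix a prime $p$ with $v_p(j_E) < 0$, so $p = p_i$ for some $i$; suppose $\rho_{E,\ell}$ is not surjective with $\ell > \max\{17,(p+1)/2\}$. Applying Theorem~\ref{T:special den} again, $p \equiv \pm 1 \pmod \ell$, so $\ell \mid p - 1$ or $\ell \mid p + 1$; in either case $\ell \leq p + 1$. If $\ell \mid p+1$ then, since $p+1$ is even and $\ell$ is an odd prime (as $\ell > 17$), we get $\ell \leq (p+1)/2$; if $\ell \mid p - 1$ then $\ell \leq p - 1 < (p+1)/2$ fails in general, so here I instead use that $\ell \mid p-1$ forces $p \geq \ell + 1 > (p+1)/2 + 1$, i.e.\ $p > (p+1)/2$, which is fine—wait, I need the cleaner split: $\ell \mid p - 1$ gives $\ell \leq p-1$, and combined with $\ell$ odd gives nothing better, but $p - 1 < p + 1$ and we need $\leq (p+1)/2$; the resolution is that $\ell$ divides either $p-1$ or $p+1$, and one of these is $\equiv 2 \pmod 4$ while the other is $\equiv 0 \pmod 4$, and in all cases the largest odd prime factor of $p^2 - 1 = (p-1)(p+1)$ is at most $(p+1)/2$ once $p \geq 5$—I will verify this elementary claim (the odd prime $\ell$ divides $(p-1)(p+1)$ and $\ell \leq (p-1)(p+1)/4 \cdot (\text{something})$; more directly, $\ell \mid p \pm 1$ and $\ell \neq 2$ forces $\ell \leq (p+1)/2$ because if $\ell = p+1$ then $p+1$ is prime and even, so $p+1 = 2$, absurd for $p$ odd, hence $\ell \leq (p+1)/2$ when $\ell \mid p+1$, while $\ell \mid p - 1$ gives $\ell \leq p - 1$, and $p - 1 \leq (p+1)/2 \iff p \leq 3$). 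The $\ell \mid p-1$ case with $p$ large needs care, so the honest statement uses $\ell \mid p^2-1$ and the bound $\ell \leq (p+1)/2$ does require a short argument that I will supply. Finally part~\eqref{P:non-integral bounds iii}: the denominator $d = \prod p_i^{e_i} \geq p_i^{e_i} \geq 2^{e_i}$, and Theorem~\ref{T:special den} gives $\ell \mid e_i$, so $\ell \leq e_i \leq \log_2 d \leq \log d / \log 2$; a crude bound suffices, and I expect the intended inequality $c(E) \leq \max\{17, \log d\}$ follows since $e_i \leq \log_2 d$ and $\log_2 d > \log d$ is false—rather $\log_2 d = \log d/\log 2 > \log d$, so I get $\ell \leq \log_2 d$, which is \emph{weaker} than $\ell \leq \log d$; the fix is that $d \geq 2^{e_i}$ actually gives $e_i \leq \log d/\log 2$, but we also know $\ell$ divides $e_i$ with $e_i \geq 1$, and if $\ell \leq e_i$ then... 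I think the cleanest route is $d \geq \prod p_i^{e_i}$ and using $\ell \mid e_i$ for at least one $i$ with $e_i \geq \ell$, whence $d \geq 2^\ell$, so $\ell \leq \log_2 d$; I will present the bound with whatever constant the inequality actually yields and note $\log_2 d \leq \log d$ is false, so perhaps the statement intends $\log_2$, but since this is a plan I flag that the constant in~\eqref{P:non-integral bounds iii} comes from $d \geq 2^\ell$.

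**Main obstacle.** There is no serious difficulty here—everything is a one-line consequence of Theorem~\ref{T:special den} plus elementary estimates. The only point requiring genuine (if minor) care is the bookkeeping in part~\eqref{P:non-integral bounds ii}: deducing $\ell \leq (p+1)/2$ from $p \equiv \pm 1 \pmod \ell$ with $\ell$ an odd prime, where one must treat the $\ell \mid p-1$ and $\ell \mid p+1$ cases and use parity to absorb a factor of $2$; and similarly pinning down the exact constant in~\eqref{P:non-integral bounds iii} from the inequality $d \geq 2^\ell$. Dispensing with the $S_0$ exceptional set under the running hypothesis $\ell > 17$ is routine: only $\ell = 37 > 17$ could in principle be an issue, but the two $j$-invariants in $S_0$ with $\ell = 37$ are $-7 \cdot 11^3$ and $-7 \cdot 137^3 \cdot 2083^3$, both integers, so they never arise when $j_E \notin \ZZ$.
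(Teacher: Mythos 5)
Your reduction of all three parts to Theorem~\ref{T:special den} is the right move and matches the paper's approach, and your eventual observation that the two $\ell=37$ pairs in $S_0$ have integer $j$-invariants (so $(\ell,j_E)\notin S_0$ automatically once $\ell>17$ and $j_E\notin\ZZ$) is correct. Part~(i) is sound.

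For part~(ii), the argument you keep circling does close, but you never actually supply it. From $p\equiv\pm1\pmod\ell$ with $\ell>17$, the prime $p$ is forced to be odd (if $p=2$ then $\ell$ divides $1$ or $3$, impossible), so both $p-1$ and $p+1$ are even. Since $\ell$ is odd, whichever of $p\pm1$ it divides has the even cofactor at least $2$, so $\ell\leq(p\pm1)/2\leq(p+1)/2$. Equivalently, as the paper puts it: $p\pm1$ is even and $>2$, hence not prime, so the prime $\ell$ is a proper divisor. Your claim that ``$\ell\mid p-1$ gives $\ell\leq p-1$, and combined with $\ell$ odd gives nothing better'' is exactly the mistake; parity of $p-1$ \emph{does} give the factor of $2$.

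Part~(iii) has a genuine gap. You use only $p\geq2$ to get $d\geq2^\ell$ and hence $\ell\leq\log_2 d$, and you correctly note this is \emph{weaker} than the claimed $\ell\leq\log d$ (since $\log_2 d=\log d/\log 2>\log d$), then speculate the statement must ``intend $\log_2$''. It does not. The missing ingredient is again the congruence from Theorem~\ref{T:special den}: $p\equiv\pm1\pmod\ell$ forces $p\geq\ell-1$, so $d\geq p^\ell\geq(\ell-1)^\ell$ and therefore $\log d\geq\ell\log(\ell-1)$. For $\ell>17$ one has $\log(\ell-1)>1$, so $\ell<\ell\log(\ell-1)\leq\log d$. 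Without the lower bound $p\geq\ell-1$ you cannot do better than base $2$, and the stated inequality does not follow from your version of the argument.
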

\begin{proof}
Take any prime $\ell > 13$ for which $\rho_{E,\ell}$ is not surjective.    If $(\ell, j_E) \in \scrS$, then $\ell=17$ since $j_E$ is not an integer by assumption.   So we may assume that $(\ell,j_E)\notin \scrS$. Proposition~\ref{P:S non-integral} implies that $\ell \leq g$ since $\max\, S \leq \max\{17, g\}$.

Take any prime $p$ satisfying $v_p(j_E)<0$.  We have $p\equiv \pm 1 \pmod{\ell}$ by Theorem~\ref{T:special den}.   Since $p+1$ and $p-1$ are not primes, we must have $\ell \leq (p+1)/2$.   By Theorem~\ref{T:special den}, the denominator $d$ of $j_E$ is divisible by $p^\ell$ and is thus at least $(\ell-1)^\ell$.  Hence, $\ell \leq \ell\log(\ell-1)\leq \log d$. 

The proposition follows from the given upper bounds for $\ell$.
\end{proof}

\begin{remark}
For any non-CM elliptic curve $E/\QQ$, Masser and W\"ustholz \cite{MR1209248} have shown that $c(E)\leq c (\max\{1,h(j_E)\})^\gamma$, where $c$ and $\gamma$ are absolute constants (which if computed are very large) and $h(j_E)$ is the logarithmic height of $j_E$.  Proposition~\ref{P:non-integral bounds}(\ref{P:non-integral bounds iii}) gives a simple version in the case $j_E\notin \ZZ$ since $\log d \leq h(j_E)$. 
\end{remark}

\subsection{A bound} \label{SS:some bounds}

We now discuss some bounds for $c(E)$ in terms of the conductor.    Kraus \cite{MR1360773} proved that 
\[
c(E) \leq 68 \rad(N_E) (1+\log\log \rad(N_E))^{1/2}
\]  
where $\rad(N_E)=\prod_{p|N_E}p$.  Using a similar approach, Cojocaru \cite{MR2118760} showed that $c(E)$ is at most $\frac{4}{3}\sqrt{6} \cdot N_E \prod_{p|N_E}(1+1/p)^{1/2}+1$.    We shall strengthen these bounds with the following theorem which will be proved in \S\ref{S:bound proof}.  

\begin{thm} \label{T:integral j}
Let $E/\QQ$ be a non-CM elliptic curve that has no primes of multiplicative reduction.  Then
\[
c(E)\leq \max\Big\{37, \,\,\frac{2\sqrt{3}}{3} N_E^{1/2}\prod_{p|N_E}\Big(\frac{1}{2}+\dfrac{1}{2p}\Big)^{1/2} \Big\}.
\]
In particular, $c(E)\leq \max\big\{37, N_E^{1/2}\big\}$.   
\end{thm}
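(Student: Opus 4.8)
The plan is to reduce to Serre's classification of the possible proper subgroups $H = \rho_{E,\ell}(\Gal_\QQ) \subsetneq \GL_2(\FF_\ell)$ for $\ell > 37$, and in each case produce a prime $p$ of good reduction whose Frobenius trace $a_p(E)$ is forced to satisfy a congruence or quadratic-residue condition modulo $\ell$ that can only hold when $p$ is small (compared to $N_E$) — then run over all good primes below the effective bound and show a contradiction. Concretely, for $\ell > 37$ and $E$ without multiplicative reduction, the mod-$\ell$ image is (up to conjugacy) contained in the normalizer of a split or nonsplit Cartan subgroup, or in a Borel, or its projective image is exceptional ($A_4, S_4, A_5$); the exceptional and Borel cases are excluded for $\ell > 37$ using the hypothesis that $E$ has potentially good reduction everywhere together with the results of \cite{MR0387283} and the isogeny classification recorded in $S_0$, so the substantive case is the Cartan-normalizer one. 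There, for every good prime $p \ne \ell$ either $\Frob_p$ lands in the Cartan (so $a_p(E)^2 - 4p$ is a square, resp. nonsquare, mod $\ell$) or it lands in the nontrivial coset (so $a_p(E) \equiv 0 \pmod \ell$).

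First I would set up the effective sieve: let $P$ be the product of the primes of bad reduction (so $\rad(N_E) = P$ since there is no multiplicative reduction, every bad prime has additive reduction and hence $p^2 \mid N_E$), and use an effective Chebotarev / counting bound — in the style of Kraus \cite{MR1360773} and Cojocaru \cite{MR2118760}, but optimized — to guarantee that among the good primes $p$ below $B := \tfrac{2\sqrt 3}{3} N_E^{1/2}\prod_{p \mid N_E}(\tfrac12 + \tfrac1{2p})^{1/2}$ there is one realizing each of the unavoidable Frobenius classes. The key arithmetic input is that a nonzero integer $a$ with $|a| \le 2\sqrt p$ and $p < B$ cannot be divisible by $\ell$ once $\ell > 2\sqrt B$, which forces $\ell$ to be small; and that the quadratic conditions "$a_p^2 - 4p$ is/ isn't a QR mod $\ell$" for two suitably chosen small good primes $p$ are incompatible with membership in a single Cartan, giving the contradiction. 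The numerical constant $\tfrac{2\sqrt3}{3}$ and the Euler factors $(\tfrac12 + \tfrac1{2p})^{1/2}$ will come out of estimating $\#\{p \le x : p \nmid N_E\}$ against $\tfrac12 \prod_{p\mid N_E}(1 - 1/p)\cdot \tfrac{x}{\log x}$-type quantities and demanding that this count exceed the number of Galois-conjugacy classes we must hit — I would track these constants through an explicit version of the Brun–Titchmarsh or large-sieve inequality rather than an ineffective Chebotarev, exactly as Cojocaru does, but with the multiplicative-reduction case removed the Borel subgroups largely drop out and the bound improves to $N_E^{1/2}$ up to the displayed factor.

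The main obstacle will be making the counting step genuinely effective while keeping the constant as small as $\tfrac{2\sqrt3}{3}$: one needs, for the Cartan-normalizer image, to find a good prime $p \le B$, $p \ne \ell$, with $\rho_{E,\ell}(\Frob_p)$ lying in the nontrivial coset of the Cartan (so $\ell \mid a_p(E)$), and simultaneously two good primes with Frobenius inside the Cartan whose discriminants have opposite quadratic characters mod $\ell$ — and to do all of this with primes small enough that the Hasse bound $|a_p(E)| \le 2\sqrt p \le 2\sqrt B$ already contradicts divisibility by $\ell > 37$ (plus $\ell > 2\sqrt B$), or else to argue that $2\sqrt B < 37$ cannot happen for the relevant $N_E$. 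Handling the split/nonsplit Cartan dichotomy uniformly, and checking that the "$\max\{37,\dots\}$" really absorbs all the residual small cases (the curves with extra isogenies, the four pairs in $S_0$, and the sporadic images that survive $\ell \le 37$), is where the care is needed; the reduction to the final clean bound $c(E) \le \max\{37, N_E^{1/2}\}$ is then just the elementary estimate $\prod_{p\mid N_E}(\tfrac12 + \tfrac1{2p})^{1/2} \le 1$ together with $\tfrac{2\sqrt3}{3} < 2$ and $N_E = \prod p^{e_p}$ with $e_p \ge 2$ at bad primes, so that $\tfrac{2\sqrt3}{3} N_E^{1/2}\prod(\cdots)^{1/2} \le N_E^{1/2}$ once $N_E$ is not too small — the remaining finitely many small conductors being disposed of by the $37$ in the maximum.
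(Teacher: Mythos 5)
Your plan is a genuinely different approach from the paper's, and it has a substantive gap.

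The paper's proof does not run an effective Chebotarev / sieve argument at all. It defines the quadratic character $\varepsilon_\ell$ attached to the non-split Cartan normalizer (Borel is killed by Mazur, the split Cartan normalizer by Bilu--Parent--Rebolledo, and the exceptional $\mathfrak S_4$ image by Serre, so \emph{only} non-split remains once $\ell > 37$ --- your proposal leaves split Cartan in play, which would derail the argument), twists $E$ by $\varepsilon_\ell$ to obtain $E'$, shows via Lemma~\ref{L:unramified}(\ref{L:unramified ii}) that $E$ and $E'$ have the \emph{same conductor} $N$, invokes modularity to get newforms $f,g \in S_2(\Gamma_0(N))$, and then proves a Sturm-type bound, sharpened by averaging over the Atkin--Lehner subgroup $W(N)$, to produce a prime $q \leq \frac{N}{3}\prod_{p\mid N}(\tfrac12+\tfrac1{2p}) - 1$ with $a_q(f) \neq a_q(g)$. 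Because there is no multiplicative reduction, that $q$ must be a good prime, $\varepsilon_\ell(\Frob_q) = -1$, hence $\ell \mid a_q(E)$ with $a_q(E) \neq 0$, and Hasse gives $\ell \leq 2\sqrt q$. Two points here are essential and absent from your plan: the passage to the twist $E'$ plus modularity (this is what replaces Chebotarev entirely), and the Atkin--Lehner trick, which is exactly where the factor $\prod(\tfrac12+\tfrac1{2p})$ instead of $\prod(1+\tfrac1p)$ comes from. Also, Kraus and Cojocaru use precisely this modular-forms / Sturm mechanism (not Brun--Titchmarsh or the large sieve, as you assert); the paper's gain over them comes from (a) only needing the non-split case, (b) no multiplicative reduction forcing $N_{E'} = N_E$, and (c) the $2^{r-1}$ saving from Atkin--Lehner.

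Beyond the wrong mechanism, your quantitative bookkeeping is off: you take $B$ to be the stated bound on $\ell$ (order $N^{1/2}$) and then search for good primes $p \leq B$ and apply Hasse to conclude $\ell \leq 2\sqrt B$, which is order $N^{1/4}$; that is not what the theorem says and is also not what an effective sieve could plausibly deliver unconditionally. In the correct argument the Sturm-type bound on $q$ is of order $N$, and Hasse then cuts that down to $\ell$ of order $N^{1/2}$. Finally, the requirement of finding \emph{two} Cartan primes with opposite quadratic characters for the discriminant is unnecessary: a single prime $q$ with $\varepsilon_\ell(\Frob_q) = -1$ (i.e., $\Frob_q$ in the nontrivial coset) already yields $\ell \mid a_q(E)$ via Lemma~\ref{L:interesting coset}, and that is all one needs.
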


Suppose that we are in the excluded case where $E/\QQ$ has multiplicative reduction at a prime $p$.   Then the bound $c(E) \leq \max\{37, (p+1)/2\}$ from Proposition~\ref{P:non-integral bounds} already gives a sizeable improvement over the bounds of Kraus and Cojocaru. 

\subsection*{Acknowledgements}
Thanks to Andrew Sutherland and Barinder Singh Banwait.  Thanks also to Larry Rolen and William Stein for their corrections of an older version of this paper.   Computations were performed with \texttt{Magma} \cite{Magma}.

\section{The character $\varepsilon_\ell$} \label{S:background}

Fix a non-CM elliptic curve $E/\QQ$ and a prime $\ell >13$ with $(\ell,j_E)\notin \scrS$ such that the representation $\rho_{E,\ell}$ is \emph{not} surjective.     

\begin{prop}[Serre, Mazur, Bilu-Parent-Rebolledo]  \label{P:at least 17}
With assumptions as above, the image of $\rho_{E,\ell}$ lies in the normalizer of a non-split Cartan subgroup of $\GL_2(\FF_\ell)$.
\end{prop}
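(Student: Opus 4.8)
The plan is to recall the classification of maximal subgroups of $\GL_2(\FF_\ell)$ and eliminate all possibilities except the normalizer of a non-split Cartan, using the hypotheses $\ell > 13$ and $(\ell, j_E) \notin S_0$ together with known finiteness results for the relevant modular curves. Since $\rho_{E,\ell}$ is not surjective, its image $G$ is contained in some maximal subgroup of $\GL_2(\FF_\ell)$. The determinant of $\rho_{E,\ell}$ is the mod $\ell$ cyclotomic character, which is surjective onto $\FF_\ell^\times$, so $G$ surjects onto $\FF_\ell^\times$ under $\det$; this already rules out any $G$ contained in $\SL_2(\FF_\ell)$ or in a subgroup whose determinant image is proper. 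By Serre's classification (Dickson's theorem applied to the image in $\PGL_2(\FF_\ell)$), the image $G$ is, up to conjugacy, contained in one of the following: (a) a Borel subgroup, (b) the normalizer of a split Cartan subgroup, (c) the normalizer of a non-split Cartan subgroup, or (d) a subgroup whose image in $\PGL_2(\FF_\ell)$ is one of the ``exceptional'' groups $A_4$, $S_4$, or $A_5$.

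First I would dispose of case (d). If the projective image is $A_4$, $S_4$, or $A_5$, then $\ell$ is bounded: the known bound (see Serre, and refinements) forces $\ell \le 13$, contradicting $\ell > 13$. (More precisely, one uses that for $\ell \ge 17$ an $A_4$, $S_4$, or $A_5$ projective image cannot occur for a curve over $\QQ$ because the corresponding extension would give a point on a modular curve of genus $\ge 2$ with no rational points, or directly from congruence obstructions on $a_p(E)$.) Next, case (a): if $G$ lies in a Borel, then $E$ admits a rational $\ell$-isogeny, so $j_E$ is the image of a rational point on $X_0(\ell)$. By Mazur's theorem on rational isogenies, $X_0(\ell)(\QQ)$ consists only of cusps for $\ell > 37$ with $\ell \ne 37$, and for the finitely many remaining $\ell \in \{17, 37\}$ (and the smaller ones, already excluded) the non-cuspidal rational points have $j$-invariants exactly the ones recorded in $S_0$ — so the hypothesis $(\ell, j_E) \notin S_0$ together with $\ell > 13$ eliminates this case. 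Then case (b): if $G$ lies in the normalizer of a split Cartan, $j_E$ gives a rational point on $X_{\mathrm{sp}}^{+}(\ell)$; by the theorem of Bilu–Parent–Rebolledo, for $\ell > 13$ (in fact for $\ell \ge 11$, $\ell \ne 13$) all rational points on $X_{\mathrm{sp}}^{+}(\ell)$ are CM or cuspidal, contradicting that $E$ is non-CM. This leaves only case (c), which is the assertion.

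The main obstacle is that this is not a self-contained argument: it rests entirely on three deep external inputs — Mazur's determination of $X_0(\ell)(\QQ)$ for the isogeny (Borel) case, the Bilu–Parent–Rebolledo theorem ruling out the split Cartan normalizer case for $\ell > 13$, and the (classical, but still nontrivial) exclusion of the exceptional projective images for large $\ell$. In writing this I would be careful to cite each precisely and to check the exact threshold ($\ell > 13$, with $S_0$ accounting for the non-cuspidal points in the borderline isogeny cases $\ell = 17, 37$), since the statement is calibrated exactly to the point where all three inputs simultaneously apply. No genuinely new argument is needed; the content is in correctly assembling and citing these results, which is presumably why the proposition is attributed to Serre, Mazur, and Bilu–Parent–Rebolledo jointly.
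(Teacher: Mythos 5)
Your proposal follows essentially the same route as the paper: reduce to a maximal subgroup with full determinant image, invoke the Dickson/Serre classification, and rule out the Borel case (Mazur, with $S_0$ absorbing the non-cuspidal rational points on $X_0(17)$ and $X_0(37)$), the split Cartan normalizer (Bilu--Parent--Rebolledo), and the exceptional projective images (Serre), leaving only the non-split Cartan normalizer. One small inaccuracy: $X_0(\ell)(\QQ)$ does contain non-cuspidal points for various $\ell > 37$ (e.g.\ $\ell = 43, 67, 163$, all CM $j$-invariants), so the correct way to phrase the Borel exclusion is that for a \emph{non-CM} curve and $\ell > 13$, a rational $\ell$-isogeny forces $(\ell, j_E) \in S_0$; also note that the paper's list (d) is sharper than yours --- given that the maximal subgroup must contain the scalars and have full determinant, the only exceptional case that survives is projective $\mathfrak{S}_4$ with $\ell \equiv \pm 3 \pmod 8$, not all of $A_4, S_4, A_5$.
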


Before explaining the proposition, let us recall some facts about non-split Cartan subgroups.   A \defi{non-split Cartan subgroup} of $\GL_2(\FF_\ell)$ is the image of a homomorphism  $\FF_{\ell^2}^\times \hookrightarrow \Aut_{\FF_\ell}(\FF_{\ell^2})\cong \GL_2(\FF_\ell)$, where the first map comes from acting by multiplication and the isomorphism arises from some choice of $\FF_\ell$-basis of $\FF_{\ell^2}$.     Let $C$ be a non-split Cartan subgroup; it is cyclic of order $\ell^2-1$ and is uniquely defined up to conjugacy in $\GL_2(\FF_\ell)$.  Let $N$ be the normalizer of $C$ in $\Aut_{\FF_\ell}(\FF_{\ell^2})\cong \GL_2(\FF_\ell)$; it is the subgroup generated by $C$ and the automorphism $a\mapsto a^\ell$ of $\FF_{\ell^2}$.  In particular, $[N:C]=2$.

Fix a non-square $\epsilon \in \FF_\ell$.   After replacing $C$ by a conjugate, one can take $C$ to be the group consisting of matrices of the form $\left(\begin{smallmatrix}a & b \epsilon \\  b & a\end{smallmatrix}\right)$ with $(a,b)\in \FF_\ell^2-\{(0,0)\}$; the group $N$ is then generated by $C$ and the matrix $\left(\begin{smallmatrix}1 &  0 \\  0 & -1\end{smallmatrix}\right)$.   For all $g\in N-C$,  $g^2$ is scalar and $\tr(g)=0$.

\begin{proof}[Proof of Proposition~\ref{P:at least 17}]
Suppose that $\rho_{E,\ell}$ is not surjective; its image lies in a maximal subgroup $H$ of $\GL_2(\FF_\ell)$.  We have $\det(\rho_{E,\ell}(\Gal_\QQ))=\FF_\ell^\times$ since the character $\det\circ \rho_{E,\ell}$ corresponds to the Galois action on the $\ell$-th roots of unity.   Therefore, $\det(H)=\FF_\ell^\times$.  From \cite{MR0387283}*{\S2}, we find that, up to conjugation, $H$ is one of the following:
\begin{alphenum}
\item \label{I:a}
a Borel subgroup of $\GL_2(\FF_\ell)$,
\item \label{I:b}
the normalizer of a split Cartan subgroup of $\GL_2(\FF_\ell)$,
\item \label{I:c}
the normalizer of a non-split Cartan subgroup of $\GL_2(\FF_\ell)$,
\item \label{I:d}
for $\ell\equiv \pm 3 \pmod{8}$, a subgroup of $\GL_2(\FF_\ell)$ that contains the scalar matrices and whose image in $\PGL_2(\FF_\ell)$ is isomorphic to the symmetric group $\mathfrak{S}_4$.
\end{alphenum}
That $\rho_{E,\ell}(\Gal_\QQ)$ is not contained in a Borel subgroup when $\ell>13$ and $(\ell,j_E)\notin \scrS$ is a theorem of Mazur, cf.~\cite{MR482230}; the modular curves $X_0(17)$ and $X_0(37)$ each have two rational points which are not cusps or CM points and these points are explained by the pairs $(\ell,j_E)\in \scrS$.  Bilu, Parent and Rebolledo have shown that $\rho_{E,\ell}(\Gal_\QQ)$ cannot be conjugate to a subgroup as in (\ref{I:b}), cf.~\cite{1104.4641}; they make effective the bounds in earlier works of Bilu and Parent using improved isogeny bounds of Gaudron and R\'emond.    Serre has shown that $\rho_{E,\ell}(\Gal_\QQ)$ cannot be conjugate to a subgroup as in (\ref{I:d}), cf.~\cite{MR644559}*{\S8.4}.     Therefore, the only possibility for $H$ is to be a group as in (\ref{I:c}).
\end{proof}

By Proposition~\ref{P:at least 17} and our assumption on $\rho_{E,\ell}$, the image of $\rho_{E,\ell}$ is contained in the normalizer $N$ of a non-split Cartan subgroup $C$ of $\GL_2(\FF_\ell)$.  Following Serre, we define the quadratic character
\[
\varepsilon_\ell \colon \Gal_\QQ \xrightarrow{\rho_{E,\ell}} N/C \xrightarrow{\sim} \{\pm 1\}.
\]
For each prime $p$, let $I_p$ be an inertia subgroup of $\Gal_\QQ$ at $p$.  Recall that $\varepsilon_\ell$ is unramified at $p$ if and only if $\varepsilon_\ell(I_p)=\{1\}$.  We now state several basic lemmas concerning the character $\varepsilon_\ell$.   Let $q_1,\ldots, q_d$ be the primes from \S\ref{SS:method}.

\begin{lemma} \label{L:unramified}
\begin{romanenum}
\item \label{L:unramified i}
The character $\varepsilon_\ell$ is unramified at $\ell$ and at all primes $p\notin \{q_1,\ldots, q_d\}$.
\item \label{L:unramified ii}
If $p\in \{q_1,\ldots, q_d\}-\{\ell\}$, then $\rho_{E,\ell}(I_p)$ contains $-I$ and an element of order $4$.
\end{romanenum}
\end{lemma}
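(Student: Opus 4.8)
The plan is to exploit throughout that $\rho_{E,\ell}(I_p)\subseteq N$ for every prime $p$, and that $\varepsilon_\ell$ is ramified at $p$ exactly when $\rho_{E,\ell}(I_p)\not\subseteq C$, i.e.\ when $\rho_{E,\ell}(I_p)$ meets $N\setminus C$; recall that every element of $N\setminus C$ has trace $0$. I would first record three elementary facts about the finite group $N$: (a) $\ell\nmid |N|=2(\ell^2-1)$, so $N$ has no element of order $\ell$; (b) $C$ is the unique cyclic subgroup of index $2$ in $N$ (there are three index-$2$ subgroups, corresponding to the index-$2$ subgroups of $N^{\ab}$, and a direct check shows that only one of them is cyclic); (c) a non-scalar element of $N$ whose eigenvalues lie in $\FF_\ell$ must lie in $N\setminus C$, hence has trace $0$ — since the non-scalar elements of $C$ have eigenvalues generating $\FF_{\ell^2}$.

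For part (i) at $p=\ell$: the group $\rho_{E,\ell}(I_\ell)$ is cyclic, because by (a) it has order prime to $\ell$ and factors through the pro-cyclic tame quotient of $I_\ell$. If $E$ had potentially multiplicative or potentially ordinary reduction at $\ell$, then $\rho_{E,\ell}|_{I_\ell}$ (being prime-to-$\ell$, hence completely reducible) would be a sum $\psi_1\oplus\psi_2$ with $\theta:=\psi_1/\psi_2$ a character of order $>2$ (indeed $\ge(\ell-1)/6$, coming from the cyclotomic factor), so $\rho_{E,\ell}(I_\ell)$ would contain many non-scalar elements $\operatorname{diag}(\psi_1(\gamma),\psi_2(\gamma))$ with eigenvalues in $\FF_\ell$; by (c) each of these has trace $0$, forcing $\theta(\gamma)=-1$ whenever $\theta(\gamma)\ne 1$, i.e.\ $\theta$ has order $\le 2$ — a contradiction. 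Hence $E$ has potentially good, supersingular reduction at $\ell$. Over a finite extension of $\QQ_\ell$ where $E$ acquires good supersingular reduction, the image of inertia is a full non-split Cartan subgroup $C'$ of order $\ell^2-1$ (image of the fundamental character of level $2$); since $C'\subseteq\rho_{E,\ell}(I_\ell)\subseteq N$, fact (b) gives $C'=C$, and then $\rho_{E,\ell}(I_\ell)$, being cyclic and squeezed between $C$ and $N$, equals $C$. So $\varepsilon_\ell$ is unramified at $\ell$.

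Next I would handle primes $p\ne\ell$, where $\rho_{E,\ell}(I_p)$ is finite and, since $\ell$ is large, is governed by the reduction type of $E$ at $p$ (N\'eron--Ogg--Shafarevich and Grothendieck's quasi-unipotence theorem). Good reduction gives $\rho_{E,\ell}(I_p)=1\subseteq C$. If $E$ has potentially multiplicative reduction at $p$ (so $v_p(j_E)<0$, and in particular $p\notin\{q_1,\dots,q_d\}$), the Tate curve shows $\rho_{E,\ell}(I_p)$ contains a unipotent of order $\ell$ unless $\ell\mid v_p(j_E)$, which (a) forbids; hence $\ell\mid v_p(j_E)$ and $\rho_{E,\ell}(I_p)\subseteq\{\pm I\}\subseteq C$. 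So $\varepsilon_\ell$ ramifies — and the situation of (ii) can arise — only at primes of additive, potentially good reduction. For such a prime $p\ge 5$, $\rho_{E,\ell}(I_p)$ is cyclic of order the semistability defect $e\in\{2,3,4,6\}$, generated by the reduction mod $\ell$ of an automorphism $\phi$ of the good model, whose characteristic polynomial is the $e$-th cyclotomic polynomial $\Phi_e$; since $\tr\phi\ne 0$ for $e\in\{1,2,3,6\}$, in those cases $\phi\in C$ and $\rho_{E,\ell}(I_p)=\langle\phi\rangle\subseteq C$. Thus ramification forces $e=4$, and then $\phi^2=-I$ by Cayley--Hamilton applied to $x^2+1$, so $\rho_{E,\ell}(I_p)=\langle\phi\rangle$ contains $-I$ and an element of order $4$. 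Finally $e=4$ at $p\ge 5$ is equivalent to $v_p(\Delta_E^{\min})$ being odd, and since the good model then has $j$-invariant $1728$ while $j_E-1728=c_6^2/\Delta$ on a minimal model, equivalent to $v_p(j_E-1728)$ being positive and odd, i.e.\ to $p\in\{q_1,\dots,q_d\}$. This settles (i) for $p\ge 5$ and for $p=\ell$, and settles (ii) for all $p\ge 5$.

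What remains — and what I expect to be the main obstacle — is the case $p\in\{2,3\}$. Here there is no clean conceptual shortcut: in residue characteristic $2$ or $3$ the automorphism group of a supersingular curve is larger, the semistability defect can exceed $6$, and wild inertia can act non-trivially, so one must go through the explicit Kodaira--N\'eron classification, following the analyses of Serre and Kraus. The fact one needs is that $\rho_{E,\ell}(I_p)\not\subseteq C$ — equivalently, that $\rho_{E,\ell}(I_p)$ contains an order-$4$ automorphism of the good reduction model, which necessarily squares to $-I$ — occurs precisely when $v_2(j_E)\in\{3,6,9\}$ (for $p=2$) and when $v_3(j_E-1728)$ is positive and odd (for $p=3$), that is, precisely when $p\in\{q_1,\dots,q_d\}$. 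This case analysis is exactly where the numerology in the definition of the $q_i$ originates, and combining it with the previous paragraphs completes both (i) and (ii).
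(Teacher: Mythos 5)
Your treatment of the primes $p=\ell$, the primes $p$ of potentially multiplicative reduction, and the primes $p\ge 5$ of additive potentially good reduction is sound and closely parallels the paper's argument in both structure and content: the paper also reduces to $\Phi_p:=\rho_{E,\ell}(I_p)\subseteq\Aut(\widetilde{E})\cap\SL_2(\FF_\ell)$, uses the congruence $v_p(j_E-1728)\equiv v_p(\Delta)\pmod 2$, and, as you do, identifies ``$\Phi_p$ not in $C$'' with ``$\Phi_p$ has order $4$'' for $p\ge 5$. At $p=\ell$ the paper simply cites Serre's Lemme~18$'$ (the image of $I_\ell$ in $\PGL_2(\FF_\ell)$ has an element of order $\ge(\ell-1)/4>2$) rather than running the ordinary/supersingular dichotomy you sketch; your route should also work, but the assertion that in the potentially supersingular case ``the image of inertia is a full non-split Cartan $C'$'' is stronger than needed and is not obviously correct once one passes to a ramified extension (the fundamental character of level $2$ gets raised to the power $e(K/\QQ_\ell)$, and $\gcd(e,\ell^2-1)$ need not be $1$). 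What you actually need there is only that the cyclic image of $I_\ell$ has an element of non-scalar square, and citing Serre is the cleanest way to get it.

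The genuine gap is the one you flag yourself: $p\in\{2,3\}$. This is not a routine mop-up; it is precisely where the thresholds $v_2(j_E)\in\{3,6,9\}$ and ``$v_3(j_E-1728)$ odd'' in the definition of $q_1,\dots,q_d$ come from, and the lemma is false without getting those exactly right. The paper closes this by invoking Kraus's explicit computation of the semistability defect in residue characteristic $2$ and $3$ (Th\'eor\`eme~1 and the corollary to Th\'eor\`eme~3 of \cite{MR1080288}). There is also a further subtlety at $p=2$ that your outline does not mention and that does not follow from the pattern at $p\ge 5$: $\Aut(\widetilde E)$ can be $\SL_2(\FF_3)$ of order $24$, and one must rule out $\Phi_2\cong\SL_2(\FF_3)$ before the classification ``$\Phi_p$ is cyclic of order $2,3,4,6$, or of order $8,12,24$'' can be leveraged. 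The paper does this by noting that $\SL_2(\FF_3)$ has no normal subgroup of index $2$ and hence cannot embed in $N$. Until the $p=2,3$ analysis is carried out (including this exclusion), the proof of both parts of the lemma is incomplete.
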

\begin{proof}
Take any prime $p$.

\noindent $\bullet$ 
First suppose that $p=\ell$.   Let $I_\ell'$ be the maximal pro-$\ell$ subgroup of $I_\ell$. We have $\rho_{E,\ell}(I_\ell')=1$ since $N$ has cardinality relatively prime to $\ell$.  The group $\rho_{E,\ell}(I_\ell)$ is cyclic since every finite quotient of the tame inertia group $I_\ell/I_\ell'$ is cyclic, see~\cite{MR0387283}*{\S1.3} for the the structure of $I_\ell/I_\ell'$.       Fix a generator $g$ of $\rho_{E,\ell}(I_\ell)$.  By the proof of \cite{MR644559}*{p.397 Lemme 18'}, the image $\rho_{E,\ell}(I_\ell)$ in $\PGL_2(\FF_\ell)$ contains an element of order at least $(\ell-1)/4>2$.  The order of the image of $g$ in $\PGL_2(\FF_\ell)$ is greater than $2$, so $g^2$ is not a scalar matrix.   However, $g^2$ is a scalar matrix for all $g\in N-C$.  So $g$ belongs to $C$ and thus $\rho_{E,\ell}(I_\ell) \subseteq C$.    Therefore, $\varepsilon_\ell$ is unramified at $\ell$.

\noindent $\bullet$ 
Suppose that $p\neq \ell$ and that $E$ has good reduction at $p$.   We have $\rho_{E,\ell}(I_p) =\{I\}\subseteq C$ since $\rho_{E,\ell}$ is unramified at such primes $p$.  Therefore, $\varepsilon_\ell$ is unramified at $p$.

\noindent $\bullet$ 
Suppose that $p\neq \ell$ and that $v_p(j_E) < 0$.  Using a Tate curve, we shall show in \S\ref{SS:special denominators} that $\varepsilon_\ell$ is unramified at $p$ (and moreover that $\varepsilon_\ell(\Frob_p) \equiv p \pmod{\ell})$; the proof will use the definition of $\varepsilon_\ell$ but none of the successive lemmas in this section.

\noindent $\bullet$ 
Finally suppose that $p\neq \ell$ is a prime for which $E$ bad reduction at $p$ and $v_p(j_E)\geq 0$.   Choose a minimal Weierstrass model of $E/\QQ$ and let $\Delta$, $c_4$ and $c_6$ be the standard invariants attached to this model as given in \cite{MR2514094}*{III~\S1}.

Let $\Phi_p$ be the image of $I_p$ under $\rho_{E,\ell}$.  We can identify $\Phi_p$ with $\Gal(L/\QQ_p^{\un})$ where $L$ is the smallest extension of $\QQ_p^{\un}$ for which $E$ base extended to $L$ has good reduction.   Moreover, one knows that $\Phi_p$ is isomorphic to a subgroup of $\Aut(\widetilde{E})$ where $\widetilde{E}/\FFbar_p$ is the reduction of $E/L$, cf.~\cite{MR0387283}*{\S5.6}.   We have $\Phi_p \subseteq \SL_2(\FF_\ell)$ since $\det\circ \rho_{E,\ell}$ is ramified only at the prime $\ell$.  In particular, if there is an element in $\Phi_p$ with order $2$, then it is $-I$.

Consider $p\geq 5$.   The group $\Aut(\widetilde{E})$ is cyclic of order $2$, $4$ or $6$, so $\Phi_p$ is cyclic of order $2$, $3$, $4$ or $6$.  We have $j_E-1728= c_6^2/\Delta$, so $v_p(j_E-1728) \equiv v_p(\Delta) \pmod{2}$.  From \cite{MR0387283}*{\S5.6}, we find that $\Phi_p$ has order $2$, $3$ or $6$ if and only if $v_p(j-1728)$ is even.

Consider $p=3$.  The group $\Aut(\widetilde{E})$ is now either cyclic of order $2$, $3$, $4$ or $6$, or is a non-abelian group of order $12$ (it is a semi-direct product of a cyclic group of order $4$ by a distinguished subgroup of order $3$).   Using that $v_p(j_E-1728) \equiv v_p(\Delta) \pmod{2}$ and Th\'eor\`eme~1 of \cite{MR1080288}, we find that $\Phi_p$ has order $2$,  $3$ or $6$ if and only if $v_p(j-1728)$ is even.  

Consider $p=2$.    Then the group $\Aut(\widetilde{E})$, and hence also $\Phi_p$ is isomorphic to a subgroup of $\SL_2(\FF_3)$.    The group $\Phi_p$ is either cyclic of order $2$, $3$, $4$ or $6$, isomorphic to the order $8$ group of quaternions $\{\pm 1,\pm i,\pm j,\pm k\}$, or is isomorphic to $\SL_2(\FF_3)$.  We have $j_E = c_4^3/\Delta$ and hence $v_2(j_E)=3v_2(c_4) - v_2(\Delta)$.   Checking all the cases in the corollary to Th\'eor\`eme~3 of \cite{MR1080288}, we find $\Phi_p$ has order $2$, $3$, $6$ or $24$ if and only if $v_2(j_E) \notin \{3,6,9\}$.     The group $\SL_2(\FF_3)$ is not isomorphic to a subgroup of $N$ since $\SL_2(\FF_3)$ is non-abelian and has no index $2$ normal subgroups.  Since $\Phi_p \subseteq N$, this proves that $|\Phi_p|\neq 24$.

Now suppose that $p \notin \{q_1,\ldots, q_d\}$.  From the above computations and our choice of $q_j$, we find that $\Phi_p$ has order $2$, $3$ or $6$.  If $\Phi_p$ has order $2$ or $6$, then $-I \in \Phi_p$.   Since  $-I \in C$ and $[N:C]=2$, we deduce that  $\Phi_p$ is a subgroup of $C$.   Therefore, $\varepsilon_\ell$ is unramified at $p$.  This completes the proof of (\ref{L:unramified i}).

Finally suppose that $p \in \{q_1,\ldots, q_d\}-\{\ell\}$.   Then $\Phi_p$ is cyclic of order $4$ or is a nonabelian group that contains a cyclic group of order $4$.  In all these cases, $\Phi_p$ contains an element $g$ of order $4$.   The element $g^2$ of order $2$ in $C$ must be $-I$.    This completes the proof of (\ref{L:unramified ii}).
\end{proof}

\begin{remark}
If $\ell\equiv 1 \pmod{4}$, then we claim that $\varepsilon_\ell$ is ramified  at a prime $p$ if and only if $p\in \{q_1,\ldots, q_d\}-\{\ell\}$.  One direction of the claim is immediate from Lemma~\ref{L:unramified}(\ref{L:unramified i}).   Now take any prime $p\in \{q_1,\ldots, q_r\}-\{\ell\}$.   Suppose that $\varepsilon_\ell$ is unramified at $p$ and hence $\Phi_p := \rho_{E,\ell}(I_p)$ is a subgroup of $C$.   We have $\Phi_p \subseteq C \cap \SL_2(\FF_\ell)$ since $\det\circ \rho_{E,\ell}$ is ramified only at $\ell$.    The group $C \cap \SL_2(\FF_\ell)$ has no elements of order $4$ since it is cyclic of order $\ell + 1$ and $\ell+1 \equiv 2 \pmod{4}$.  This contradicts Lemma~\ref{L:unramified}(\ref{L:unramified ii}), so $\varepsilon_\ell$ is indeed ramified at $p$.
\end{remark}

\begin{lemma} \label{L:varepsilon D}
There are unique integers $e_1,\ldots, e_d \in \{0,1\}$ such that $\varepsilon_\ell(\Frob_p)  = \legendre{-1}{p} \cdot \prod_{j=1}^d \legendre{q_j}{p}^{e_j}$ for all odd primes $p\nmid q_1\cdots q_d$.    In particular, $\varepsilon_\ell \neq 1$.
\end{lemma}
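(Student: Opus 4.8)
The plan is to recognize $\varepsilon_\ell$ as the quadratic character attached to an imaginary quadratic field $\QQ(\sqrt m)$ whose ramified primes all lie in $\{q_1,\ldots,q_d\}$, and then to read off the exponents $e_j$ from the factorization of $m$.

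First I would pin down the ramification of $\varepsilon_\ell$. Being a quadratic character of $\Gal_\QQ$, either $\varepsilon_\ell$ is trivial or it is cut out by a unique quadratic field $\QQ(\sqrt m)$ with $m\neq 1$ squarefree, and a prime ramifies in this field precisely when $\varepsilon_\ell$ is ramified there. By Lemma~\ref{L:unramified}(\ref{L:unramified i}), $\varepsilon_\ell$ is unramified outside $\{q_1,\ldots,q_d\}$. Since an odd prime ramifies in $\QQ(\sqrt m)$ exactly when it divides $m$, and $2$ ramifies exactly when $m\not\equiv 1\pmod 4$, it follows that every prime dividing $m$ belongs to $\{q_1,\ldots,q_d\}$ (indeed, if $2\mid m$ or $m\equiv 3\pmod 4$ then $\varepsilon_\ell$ ramifies at $2$, forcing $2$ to be one of the $q_j$).

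The main point is to show that $\varepsilon_\ell$ is odd. Let $c\in\Gal_\QQ$ be a complex conjugation. Then $\rho_{E,\ell}(c)^2=I$, while $\det\rho_{E,\ell}(c)=-1$, since the determinant character of $\rho_{E,\ell}$ describes the Galois action on $\mu_\ell$ and $c$ acts there by inversion. Hence $\rho_{E,\ell}(c)\neq\pm I$, so it has order exactly $2$. On the other hand $C$ is cyclic of order $\ell^2-1$, so $-I$ is its only element of order $2$ (alternatively, from the matrix description of $C$ one checks that $g\in C$ with $g^2=I$ forces $g=\pm I$). Therefore $\rho_{E,\ell}(c)\notin C$, and so $\varepsilon_\ell(c)=-1$; in particular $\varepsilon_\ell\neq 1$, so it is indeed cut out by some $\QQ(\sqrt m)$, and $\varepsilon_\ell(c)=-1$ forces $m<0$.

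To finish, combine the two steps: $-m$ is a positive squarefree integer all of whose prime factors lie in $\{q_1,\ldots,q_d\}$, so $m=-\prod_{j=1}^d q_j^{e_j}$ with $e_j=1$ if $q_j\mid m$ and $e_j=0$ otherwise, the $e_j$ being unique because the $q_j$ are distinct primes. For an odd prime $p\nmid q_1\cdots q_d$ we have $p\nmid m$, hence $p$ is unramified in $\QQ(\sqrt m)$ and $\varepsilon_\ell(\Frob_p)$ equals the Kronecker symbol $\legendre{m}{p}$, which by multiplicativity equals $\legendre{-1}{p}\prod_{j=1}^d\legendre{q_j}{p}^{e_j}$. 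I expect the oddness of $\varepsilon_\ell$ to be the crux: it is the only place where the hypothesis that $E$ is defined over $\QQ$ genuinely enters (via a complex conjugation acting with determinant $-1$), whereas the remaining steps are routine bookkeeping with ramification and the Kronecker symbol.
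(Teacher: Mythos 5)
Your proposal is correct and follows the same route as the paper: both reduce the matter to a squarefree integer ($m$ for you, $-D$ in the paper) cutting out $\varepsilon_\ell$, confine its prime factors to $\{q_1,\ldots,q_d\}$ via Lemma~\ref{L:unramified}(\ref{L:unramified i}), and pin down the sign by the identical complex-conjugation argument (order dividing $2$, determinant $-1$, only $\pm I$ in $C$ satisfy $g^2=I$). The only cosmetic differences are your slightly more explicit treatment of ramification at $2$ and of the order of $\rho_{E,\ell}(c)$; neither adds or omits anything essential.
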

\begin{proof}
There is a unique squarefree integer $D$ satisfying $\varepsilon_\ell(\Frob_p) = \legendre{-D}{p}$ for all odd primes $p\nmid D$.    Let $q$ be any prime dividing $D$.   The character $\varepsilon_\ell$ is ramified at $q$, so $q=q_j$ for some $j$ by Lemma~\ref{L:unramified}.    Therefore, $D$ divides $q_1\cdots q_d$.

 It remains to show that $D$ is positive.   It suffices to show that $\varepsilon_\ell(c)=-1$, where $c \in \Gal_\QQ$ corresponds to complex conjugation under a fixed embedding $\Qbar\hookrightarrow \CC$.     Set $g:=\rho_{E,\ell}(c)$.  We have $g^2=I$ since $c$ has order $2$. The matrix $g$ has determinant $-1$ since the character $\det\circ \rho_{E,\ell}$ corresponds to the Galois action on the $\ell$-th roots of unity.    The Cartan subgroup $C$ is cyclic since it is non-split, so the only elements of $C$ with order $1$ or $2$ are $I$ and $-I$.   Since $\det(\pm I)=1$, we deduce that $g\notin C$ and hence $\varepsilon_\ell(c)=-1$ as claimed.
\end{proof}

\begin{lemma} \label{L:interesting coset}
Let $p$ be a prime for which $E$ has good reduction.  If $a_p(E)\not\equiv 0 \pmod{\ell}$, then $\varepsilon_\ell(\Frob_p) = 1$.
\end{lemma}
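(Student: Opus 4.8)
The plan is to combine the congruence $\tr\rho_{E,\ell}(\Frob_p)\equiv a_p(E)\pmod{\ell}$ from the Notation section with the fact, recalled in \S\ref{S:background}, that every element of $N-C$ has trace $0$.

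First I would observe that since $E$ has good reduction at $p$, the representation $\rho_{E,\ell}$ is unramified at $p$ (here $p\neq\ell$, as is implicit in writing $\Frob_p$), so the conjugacy class of $g:=\rho_{E,\ell}(\Frob_p)$ in $\GL_2(\FF_\ell)$ is well-defined; by Proposition~\ref{P:at least 17} together with our standing assumption that $\rho_{E,\ell}$ is not surjective, the element $g$ lies in the normalizer $N$ of the non-split Cartan subgroup $C$. By the definition of $\varepsilon_\ell$ as the composite $\Gal_\QQ\xrightarrow{\rho_{E,\ell}}N\to N/C\xrightarrow{\sim}\{\pm1\}$, we have $\varepsilon_\ell(\Frob_p)=1$ precisely when $g\in C$.

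Next I would argue by contradiction. Suppose $\varepsilon_\ell(\Frob_p)=-1$, so that $g\in N-C$. Then $\tr(g)=0$ by the structure of $N$, and since $\tr(g)\equiv a_p(E)\pmod{\ell}$ we get $a_p(E)\equiv 0\pmod{\ell}$, contradicting the hypothesis $a_p(E)\not\equiv 0\pmod{\ell}$. Hence $g\in C$, i.e.\ $\varepsilon_\ell(\Frob_p)=1$, as desired.

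I do not expect a real obstacle here: once the elementary description of $N$ is available, the argument is a one-line trace computation. The only point requiring a word of care is the well-definedness of $\varepsilon_\ell(\Frob_p)$, namely that $\varepsilon_\ell$ is unramified at the good prime $p$; this is precisely what is established in the second bullet of the proof of Lemma~\ref{L:unramified}.
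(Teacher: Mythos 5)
Your argument for $p\neq\ell$ is exactly the paper's: identify $g=\rho_{E,\ell}(\Frob_p)\in N$, use $\tr(g)\equiv a_p(E)\pmod\ell$, and use $\tr=0$ on $N-C$. That part is correct.

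However, there is a gap in the case $p=\ell$. You write that ``$p\neq\ell$ is implicit in writing $\Frob_p$,'' but that is not right here: although $\rho_{E,\ell}$ is ramified at $\ell$, the quadratic character $\varepsilon_\ell$ is unramified at $\ell$ by Lemma~\ref{L:unramified}(\ref{L:unramified i}), so $\varepsilon_\ell(\Frob_\ell)$ is perfectly well-defined and the lemma genuinely includes the case $p=\ell$ whenever $E$ has good reduction at $\ell$. Your trace argument breaks down there: the congruence $\tr\rho_{E,\ell}(\Frob_p)\equiv a_p(E)\pmod\ell$ recorded in the Notation section is only stated (and only holds in the usual sense) for good primes $p\neq\ell$, and $\rho_{E,\ell}(\Frob_\ell)$ is not even a well-defined conjugacy class. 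The paper handles this by citing Serre \cite{MR0387283}*{p.317($c_5$)}, which establishes the statement ``$a_p(E)\equiv 0\pmod\ell$ whenever $\varepsilon_\ell(\Frob_p)=-1$'' for every good prime $p$ (including $p=\ell$, where one has to analyze the restriction of $\rho_{E,\ell}$ to a decomposition group at $\ell$ rather than use the naive trace congruence), and then notes that the easy trace argument recovers it for $p\neq\ell$. To close the gap you would need either to invoke that reference or to add the assumption $p\neq\ell$ to the statement and verify it is never applied at $p=\ell$ elsewhere in the paper.
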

\begin{proof}
That $a_p(E)\equiv 0 \pmod{\ell}$ for every good prime $p$ satisfying $\varepsilon(\Frob_p)=-1$ is  \cite{MR0387283}*{p.317($c_5$)}; for $p\neq \ell$, this follows by noting that $\tr(g)=0$ for all $g\in N-C$.
\end{proof}

\section{Proof of Theorem~\ref{T:rank algorithm}} \label{SS:proof of rank algorithm}
Replacing $E/\QQ$ by a quadratic twist does not change the set $S$ or the set of primes $\ell$ for which $\rho_{E,\ell}$ is not surjective.   We may thus assume that $E$ has no odd primes $p$ with Kodaira type $\operatorname{I}_0^*$.  So for each $p_i$, we have $a_i = |a_{p_i}(E)|$.

Suppose that there is a prime $\ell\notin S$ for which $\rho_{E,\ell}$ is not surjective.  From our choice of $\ell$, Proposition~\ref{P:at least 17} implies that the image of $\rho_{E,\ell}$ is contained in the normalizer of a non-split Cartan subgroup of $\GL_2(\FF_\ell)$.  Let $\varepsilon_\ell \colon \Gal_\QQ\to \{\pm 1\}$ be the corresponding quadratic character.     By Lemma~\ref{L:varepsilon D}, there are unique $e_1,\ldots, e_d \in \{0,1\}$ such that  $\varepsilon_\ell(\Frob_p) =  \legendre{-1}{p} \cdot \prod_{j=1}^d \legendre{q_j}{p}^{e_j}$ for all primes $p\nmid 2q_1\cdots q_d$.  

Now consider $p=p_i$ with $1\leq i \leq r$.  We have $|a_{p_i}(E)|=a_i\not\equiv 0 \pmod{\ell}$ since $\ell\notin S$.  Lemma~\ref{L:interesting coset} implies that $\varepsilon_\ell(\Frob_{p_i})=1$ for all $1\leq i \leq r$.   Therefore,
\[
\prod_{j=1}^d \legendre{q_j}{p_i}^{e_j}= \legendre{-1}{p_i}
\]
for all $1\leq i \leq r$.    Using the isomorphism $\{\pm 1\} \cong \FF_2$, this is equivalent to having $\sum_{j=1}^d  \alpha_{i,j} e_j = \beta_i$ for all $1\leq i \leq r$.    This shows that the equation $A_r x = b_r$ has a solution in $\FF_2^d$.    However, this contradicts our choice of $r$.   Therefore, the representation $\rho_{E,\ell}$ must be surjective for all $\ell\notin S$.

\section{Proof of Theorem~\ref{T:special den}}  \label{SS:special denominators}

Take any prime $\ell>13$ for which $\rho_{E,\ell}\colon \Gal_\QQ \to \GL_2(\FF_\ell)$ is not surjective and $(\ell,j_E)\notin \scrS$.     By Proposition~\ref{P:at least 17}, the image of $\rho_{E,\ell}$ is contained in the normalizer $N$ of a non-split Cartan subgroup $C$ of $\GL_2(\FF_\ell)$.  Let $\varepsilon_\ell\colon \Gal_\QQ \to \{\pm 1\}$ be the corresponding quadratic character.\\

Take any prime $p$ that divides the denominator of $j_E$.   Define $\Gal_{\QQ_p}=\Gal(\Qbar_p/\QQ_p)$, where $\Qbar_p$ is a fixed algebraic closure of $\QQ_p$.  Choosing $\Qbar_p$ to contain $\Qbar$, the restriction map $\Gal_{\QQ_p} \to \Gal_\QQ$ is an injective homomorphism that we will view as an inclusion.  There exists an element $q\in \QQ_p$ with $v_p(q)=-v_p(j_E)>0$ such that
\[
j_E = (1+240{\sum}_{n\geq 1} n^3 q^n/(1-q^n) )^3/(q{\prod}_{n\geq 1} (1-q^n)^{24}) = q^{-1} + 744 + 196884q+\cdots;
\]
let $\calE/\QQ_p$ be the \defi{Tate curve} associated to $q$, cf.~\cite{MR1312368}*{V\S3}.  The elliptic curve $\calE$ has $j$-invariant $j_E$ and the group $\calE(\Qbar_p)$ is isomorphic to $\Qbar_p^\times/ \ang{q}$ as a $\Gal_{\QQ_p}$-module. In particular, the $\ell$-torsion subgroup $\calE[\ell]$ is isomorphic as an $\FF_\ell[\Gal_{\QQ_p}]$-module to the subgroup of $\Qbar_p^\times/ \ang{q}$ generated by an $\ell$-th root of unity $\zeta_\ell$ and a fixed $\ell$-th root $q^{1/\ell}$ of $q$.   Let $\alpha\colon \Gal_{\QQ_p}\to \FF_\ell^\times$ and $\beta\colon \Gal_{\QQ_p} \to \FF_\ell$ be the maps defined so that  
\[
\sigma(\zeta_\ell)=\zeta_\ell^{\alpha(\sigma)}\quad \text{ and }\quad \sigma(q^{1/\ell})=\zeta_\ell^{\beta(\sigma)} q^{1/\ell}
\] 
for all $\sigma\in \Gal_{\QQ_p}$.   So for an appropriate choice of basis for $\calE[\ell]$, the representation $\rho_{\calE,\ell}\colon \Gal_{\QQ_p}\to \GL_2(\FF_\ell)$ satisfies $\rho_{\calE,\ell}(\sigma)= \left(\begin{smallmatrix}\alpha(\sigma) & \beta(\sigma) \\0 & 1 \end{smallmatrix}\right)$ for $\sigma\in \Gal_{\QQ_p}$.  The curves $E$ and $\calE$ are quadratic twists of each other over $\QQ_p$ since they are non-CM curves with the same $j$-invariant.  So there is a character $\chi\colon \Gal_{\QQ_p}\to \{\pm1\}$ such that, after an appropriate choice of basis for $E[\ell]$, we have
\[
\rho_{E,\ell}(\sigma) = \chi(\sigma)\left(\begin{smallmatrix} \alpha(\sigma) & \beta(\sigma) \\0 & 1\end{smallmatrix}\right)
\]
for all $\sigma\in \Gal_{\QQ_p}$.

Since $C$ is non-split, the only matrices in $C$ with eigenvalue $1$ or $-1$ are $\pm I$.  Take any $\sigma\in \Gal_{\QQ_p}$.  So if $\rho_{E,\ell}(\sigma)$ belongs to $C$, then $\alpha(\sigma)=1$ and $\beta(\sigma)=0$.   If $\rho_{E,\ell}(\sigma)$ belongs to $N-C$, then $\alpha(\sigma)=-1$ since every matrix in $N-C$ has trace $0$.  This proves that $\alpha$ takes values in $\{\pm 1\}$ and that $\alpha(\sigma)\equiv \varepsilon_\ell(\sigma) \pmod{\ell}$ for all $\sigma\in \Gal_{\QQ_p}$.  We have $\ell\neq p$, since otherwise $\alpha(\Gal_{\QQ_p})=\FF_\ell^\times$ which is impossible since $\ell>13$ and $\alpha$ takes values in $\{\pm 1\}$.    Since $\alpha= \det \circ \rho_{E,\ell}|_{\Gal_{\QQ_p}}$ is unramified at $p$, we deduce that $\alpha$ is unramified and hence $\varepsilon_\ell$ is unramified at $p$.  Moreover, we have
\[
\varepsilon_\ell(\Frob_p) \equiv \alpha(\Frob_p) = \det \rho_{E,\ell}(\Frob_p) \equiv p \pmod{\ell}.
\]
In particular, we must have $p\equiv \pm 1 \pmod{\ell}$.
 
It remains to prove that $e:=-v_p(j_E)$ is divisible by $\ell$.   The matrices $I$ and $-I$ are the only elements of $N$ that have eigenvalue 1 or $-1$ with multiplicity 2.  Since $\alpha(\Gal_{\QQ_p(\zeta_\ell)})=1$, we must have $\beta(\Gal_{\QQ_p(\zeta_\ell)})=0$ and hence $q^{1/\ell} \in \QQ_p(\zeta_\ell)$.  Extend the valuation $v_p$ of $\QQ_p$ to $\QQ_p(\zeta_\ell)$.   Since $\QQ_p(\zeta_\ell)/\QQ_p$ is an unramified extension (we saw above that $p\neq \ell$), we deduce that $v_p(q^{1/\ell})$ belongs to $\ZZ$ and hence $e=-v_p(j_E)=v_p(q)=\ell v_p(q^{1/\ell})\in \ell \ZZ$.

\section{Proof of Theorem~\ref{T:integral j}} \label{S:bound proof}
Suppose that $\rho_{E,\ell}$ is not surjective for a prime $\ell>13$ with $(\ell,j_E)\notin \scrS$.  We can then define a quadratic character $\varepsilon_\ell\colon \Gal_\QQ\to \{\pm 1\}$ as in \S\ref{S:background}.   Let $E'/\QQ$ be the elliptic curve obtained by twisting $E/\QQ$ by $\varepsilon_\ell$. 

\begin{lemma} \label{L:same conductor}
The elliptic curves $E$ and $E'$ have the same conductors.
\end{lemma}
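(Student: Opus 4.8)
The plan is to compare conductor exponents prime by prime. Since $E'$ is the twist of $E$ by the quadratic character $\varepsilon_\ell$, the only primes where the conductor exponents of $E$ and $E'$ can possibly differ are the primes $\ell$ (the residue characteristic) and the primes at which $\varepsilon_\ell$ is ramified. By Lemma~\ref{L:unramified}(\ref{L:unramified i}), $\varepsilon_\ell$ is unramified at $\ell$ and ramified only (possibly) at the primes $q_1,\dots,q_d$. So it suffices to show that for each $p \in \{q_1,\dots,q_d\}$ the conductor exponents of $E$ and $E'$ at $p$ agree. (At $p = 2$ I would keep in mind that conductor exponents can be as large as $8$ and the naive twisting formulas require care, so that prime may need separate attention.)

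The key local input is Lemma~\ref{L:unramified}(\ref{L:unramified ii}): for $p \in \{q_1,\dots,q_d\}$ (and $p \neq \ell$, which holds since $\varepsilon_\ell$ is unramified at $\ell$), the group $\Phi_p := \rho_{E,\ell}(I_p)$ contains $-I$ and an element of order $4$; in particular $-I \in \Phi_p$. Next I would observe that twisting by $\varepsilon_\ell$ changes $\rho_{E,\ell}$ to $\varepsilon_\ell \otimes \rho_{E,\ell}$, so the image of $I_p$ under $\rho_{E',\ell}$ is $\varepsilon_\ell(I_p) \cdot \Phi_p$, which is either $\Phi_p$ (if $\varepsilon_\ell$ is unramified at $p$) or $\{\pm I\}\cdot \Phi_p = \Phi_p$ (if $\varepsilon_\ell$ is ramified at $p$, using $-I \in \Phi_p$). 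Either way $\rho_{E,\ell}(I_p) = \rho_{E',\ell}(I_p)$, so $E$ and $E'$ have the same $\ell$-adic (in fact mod $\ell$) inertia image at $p$. Since $p \neq \ell$, the conductor exponent at $p$ is determined by the inertia action on the Tate module (tame part contributes the dimension of the coinvariants, wild part via the Swan conductor), and these are read off from $\rho_{E,\ell}(I_p)$ and $\rho_{E',\ell}(I_p)$ respectively — or more robustly, one knows $E$ and $E'$ acquire good reduction over the same extension $L/\QQ_p^{\un}$, namely the fixed field of the kernel of $\rho_{E,\ell}|_{I_p}$, since the twisting character is absorbed. Hence the conductor exponents agree at every $p$, and $N_E = N_{E'}$.

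The main obstacle is making the last step — "the conductor exponent at $p$ is determined by $\rho_{E,\ell}(I_p)$" — clean and rigorous, since strictly one wants to compare the $\ell$-adic representations (or the reduction types) rather than just the mod-$\ell$ images, and the prime $p=2$ is where wild ramification and large conductor exponents live. The cleanest route is probably to argue directly with reduction types: $E$ and $E'$ are quadratic twists over $\QQ_p$ by $\varepsilon_\ell|_{\Gal_{\QQ_p}}$, and the statement that $\rho_{E,\ell}(I_p) = \rho_{E',\ell}(I_p)$ (which I established above from $-I \in \Phi_p$) forces $E$ and $E'$ to acquire good reduction over exactly the same minimal extension of $\QQ_p^{\un}$, which pins down the conductor exponent (e.g. via the Ogg–Saito formula or the explicit tables of Kraus). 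I expect this to go through without difficulty at all $p \geq 3$ and to require only a short case-check at $p=2$ using the classification already invoked in the proof of Lemma~\ref{L:unramified}.
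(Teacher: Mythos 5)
Your proposal is correct and uses the same key lemma as the paper (Lemma~\ref{L:unramified}(\ref{L:unramified ii}), giving $-I \in \Phi_p$ so that $\rho_{E',\ell}(I_p) = \varepsilon_\ell(I_p)\cdot\Phi_p = \Phi_p = \rho_{E,\ell}(I_p)$), but with a somewhat different decomposition of cases. The paper begins by invoking Lemma~1 of Kraus \cite{MR1360773}, which says that twisting by \emph{any} quadratic character preserves the reduction type (good/multiplicative/additive) at every prime; since reduction type determines the conductor exponent for $p \ge 5$, this immediately handles all $p \ge 5$, and the $-I\in\Phi_p$ argument is then needed only at $p=2,3$ where wild ramification can occur. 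You instead start from the observation that the conductor exponent can change under twist only at the primes where $\varepsilon_\ell$ ramifies (a subset of $\{q_1,\ldots,q_d\}$, by Lemma~\ref{L:unramified}(\ref{L:unramified i})), and apply the $-I\in\Phi_p$ argument uniformly at all of them. Your route avoids the external citation, which is a modest gain; the paper's route makes the residual work at $p=2,3$ manifestly small. The step you flag as the main obstacle — passing from "$\rho_{E,\ell}(I_p) = \rho_{E',\ell}(I_p)$" to "equal conductor exponent at $p$" — is also present in the paper (which handles it by stating it suffices to show the inertia images are conjugate and citing Silverman \cite{MR1312368}*{IV\S10} for the wild part). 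Your observation that the kernels of $\rho_{E,\ell}|_{I_p}$ and $\rho_{E',\ell}|_{I_p}$ coincide (so $E$ and $E'$ acquire good reduction over the same $L/\QQ_p^{\un}$) is the right extra ingredient; one then reads off the conductor exponent from the ramification data of $L$ and the faithful action of $\Gal(L/\QQ_p^{\un})$, as you say. So there is no genuine gap, only a point that needs to be stated carefully, exactly as you anticipated.
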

\begin{proof}
Take any prime $p$.   Lemma~1 of \cite{MR1360773} says that $E$ and $E'$ have the same reduction type (i.e., good, additive or multiplicative) at $p$.   This proves that $\ord_p(N_E)= \ord_p(N_{E'})$ for $p\geq 5$.  To prove this equality for $p=2$ and $3$, we need to check that the wild part of the conductors of $E$ and $E'$ at $p$ agree; for a description of the wild part of the conductor at $p$, see \cite{MR1312368}*{IV\S10}.   

For our prime $p\leq 3$, it suffices to show that the groups $\rho_{E,\ell}(I_p)$ and $\rho_{E',\ell}(I_p)$ are conjugate in $\GL_2(\FF_\ell)$.   After choosing appropriate bases of $E[\ell]$ and $E'[\ell]$, we may assume that $\rho_{E',\ell} = \varepsilon_\ell\cdot  \rho_{E,\ell}$.  If $\varepsilon_\ell$ is unramified at $p$, then $\rho_{E',\ell}(I_p) = \rho_{E,\ell}(I_p)$.   We always have $\pm \rho_{E',\ell}(I_p) = \pm \rho_{E,\ell}(I_p)$.   So if $\varepsilon_\ell$ is ramified at $p$, then Lemma~\ref{L:unramified}(\ref{L:unramified ii}) implies that $\rho_{E',\ell}(I_p) =\pm \rho_{E',\ell}(I_p) = \pm \rho_{E,\ell}(I_p)= \rho_{E,\ell}(I_p)$.
\end{proof}

By Lemma~\ref{L:same conductor}, the elliptic curves $E$ and $E'$ the same conductor; denote it by $N$.  By the modularity theorem (proved by Wiles, Taylor, Breuil, Conrad and Diamond), there are newforms $f$ and $g\in S_2(\Gamma_0(N))$ corresponding to $E$ and $E'$, respectively.  Let $a_n(f)$ and $a_n(g)$ be the $n$-th Fourier coefficient of $f$ and $g$ at the cusp $i\infty$.  The following lemma gives a Sturm bound for a prime $q$ that satisfies $a_q(f)\neq a_q(g)$.   Note that  $f$ and $g$ are distinct since $\varepsilon\neq 1$ (by Lemma~\ref{L:varepsilon D}) and since $E$ and $E'$ are non-CM.

\begin{lemma} \label{L:Riemann Roch}
Let $f$ and $g$ be distinct normalized newforms in $S_2(\Gamma_0(N))$.  Then there exists a prime $q$ such that
\begin{equation} \label{E:RR}
q \leq \dfrac{N}{3}\prod_{p|N}\Big(\frac{1}{2}+\dfrac{1}{2p}\Big)-1 
\end{equation}
and $a_q(f)\neq a_q(g)$.
\end{lemma}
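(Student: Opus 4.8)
The plan is to use the standard Sturm bound argument, but on the quotient modular curve $X_0(N)$ rather than on the upper half-plane, so that the bound comes out in terms of the genus-related quantity on the right-hand side of \eqref{E:RR} rather than the usual $\frac{k}{12}[\SL_2(\ZZ):\Gamma_0(N)]$. The difference $h := f - g$ is a nonzero cusp form in $S_2(\Gamma_0(N))$, which we may identify with a nonzero holomorphic differential $\omega_h$ on the modular curve $X_0(N)$. I will bound the order of vanishing of $\omega_h$ at the cusp $i\infty$ by comparing it against the divisor of a suitably chosen meromorphic function, and then translate a lower bound on the valuation of $h$ at $i\infty$ into the desired inequality on the first index $q$ where the $q$-expansion coefficients of $f$ and $g$ differ.

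First I would recall the dictionary between $S_2(\Gamma_0(N))$ and $H^0(X_0(N), \Omega^1)$: a weight-$2$ cusp form $h$ with $q$-expansion $\sum_{n\ge1} a_n(h) q^n$ at $i\infty$ corresponds to the differential $\omega_h = 2\pi i \, h(\tau)\,d\tau = h\, \frac{dq}{q}$, and if $a_n(h) = 0$ for $1 \le n \le m$ but $a_{m+1}(h) \ne 0$, then $\ord_{i\infty}(\omega_h) = m$ (one power of $q$ is consumed by $dq/q$). Since $\omega_h$ is a global holomorphic differential, its divisor is effective of degree $2g - 2$, where $g$ is the genus of $X_0(N)$; in particular $m = \ord_{i\infty}(\omega_h) \le 2g - 2$. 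The key step is then to bound $g$, or rather $2g-2$, in terms of $N$ and its prime divisors. Using the genus formula for $X_0(N)$,
\[
2g - 2 = \frac{\mu}{6} - \frac{\nu_2}{2} - \frac{2\nu_3}{3} - \nu_\infty,
\]
where $\mu = [\SL_2(\ZZ):\Gamma_0(N)] = N\prod_{p\mid N}(1 + 1/p)$ is the index, $\nu_2,\nu_3$ count elliptic points, and $\nu_\infty$ counts cusps, I would simply drop the nonnegative correction terms $\nu_2/2 + 2\nu_3/3$ and keep $-\nu_\infty \le -1$ (there is always at least the cusp at $i\infty$), giving $2g - 2 \le \mu/6 - 1$. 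Unwinding $\mu/6 = \frac{N}{6}\prod_{p\mid N}(1+1/p) = \frac{N}{3}\prod_{p\mid N}\bigl(\frac12 + \frac{1}{2p}\bigr)$ yields exactly the right-hand side of \eqref{E:RR}.

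To finish: if $a_q(f) = a_q(g)$ for every prime $q \le B$, where $B$ denotes the right-hand side of \eqref{E:RR}, then because $f$ and $g$ are normalized Hecke eigenforms their coefficients are multiplicative and determined by the prime-indexed ones via the Hecke recursions $a_{p^{k+1}} = a_p a_{p^k} - p\, a_{p^{k-1}}$ (for $p \nmid N$) and $a_{p^k} = a_p^k$ (for $p \mid N$); hence $a_n(f) = a_n(g)$ for all $n \le B$ with $n$ having all prime factors $\le B$, and in particular for all $n \le B$. Thus $h = f - g$ has $a_n(h) = 0$ for $1 \le n \le \lfloor B \rfloor$, forcing $\ord_{i\infty}(\omega_h) \ge \lfloor B \rfloor + 1 > 2g - 2$, which contradicts $\omega_h$ being a nonzero holomorphic differential. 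Therefore some prime $q \le B$ has $a_q(f) \ne a_q(g)$.

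I expect the main subtlety to be the reduction from ``$a_n(f) = a_n(g)$ for all $n \le B$'' to ``it suffices that this hold for all \emph{primes} $q \le B$'' — i.e., ensuring the multiplicativity/Hecke-recursion step is clean, including at primes dividing the level — together with being careful about the off-by-one in $\ord_{i\infty}$ coming from the factor $dq/q$ and about the floor function in $B$. The geometric input (identifying cusp forms with differentials and bounding the degree of a holomorphic differential by $2g-2$) and the genus estimate are routine, but I would state the coefficient-bookkeeping step explicitly so that the passage from the vanishing of differentials to the conclusion about primes $q$ is airtight.
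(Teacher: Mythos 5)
There is a real gap in your argument, and it stems from an algebraic slip. You write that
\[
\frac{\mu}{6}=\frac{N}{6}\prod_{p\mid N}\Big(1+\frac1p\Big)=\frac{N}{3}\prod_{p\mid N}\Big(\frac12+\frac1{2p}\Big),
\]
but those two quantities agree only when $N$ has a single prime factor. In general
\[
\frac{N}{3}\prod_{p\mid N}\Big(\frac12+\frac1{2p}\Big)=\frac{1}{2^{r-1}}\cdot\frac{N}{6}\prod_{p\mid N}\Big(1+\frac1p\Big),
\]
where $r$ is the number of distinct primes dividing $N$. So the right-hand side of~\eqref{E:RR} is \emph{smaller} by a factor of $2^{r-1}$ than the Sturm-type bound $\tfrac{\mu}{6}-1$ that your Riemann--Roch/degree-of-a-holomorphic-differential argument actually yields. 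What you have proven is precisely the weaker statement noted in the paper's remark after the lemma: if one only examines the cusp $i\infty$, one gets a bound of the shape $\tfrac{N}{6}\prod_{p\mid N}(1+\tfrac1p)$, which is not strong enough.

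The missing idea is the use of the Atkin--Lehner involutions. The paper considers the group $W(N)\cong(\ZZ/2\ZZ)^r$ generated by the involutions $W_{p^e}$ with $p^e\parallel N$, and uses that $\eta_f=f\,dz$ and $\eta_g=g\,dz$ are eigenvectors for $W(N)$ with eigenvalues in $\{\pm1\}$. The subgroup $H\subseteq W(N)$ on which $f$ and $g$ have the same eigenvalue has order at least $2^{r-1}$, and $\omega=(f-g)\,dz$ is (up to sign) invariant under $H$. Since $W(N)$ acts freely on the orbit of $i\infty$, the cusps $w\cdot i\infty$ for $w\in H$ are $|H|$ distinct points each contributing $\ord_{i\infty}(\omega)$ to $\deg(\operatorname{div}\omega)=2g-2$; this yields $2^{r-1}\ord_{i\infty}(\omega)\le 2g-2\le\tfrac{\mu}{6}-2^r$ (the last step again using the $2^r$ cusps in the $W(N)$-orbit of $i\infty$), which after dividing by $2^{r-1}$ and adding $1$ gives exactly~\eqref{E:RR}. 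Your translation between cusp forms and differentials, the $\ord_{i\infty}(\omega)=n-1$ bookkeeping, and the multiplicativity argument that reduces from integers $n$ to primes $q$ are all correct and match the paper; the sole but essential omission is this Atkin--Lehner step that improves the genus bound by the factor $2^{r-1}$.
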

\begin{proof}
Consider the modular curve $X_0(N)$ defined over $\CC$.  Its complex points form a Riemann surface obtained by quotienting the complex upper-half plane by $\Gamma_0(N)$ and then compactifying by adding cusps.  For each prime power $q=p^e$ such that $p^e \parallel N$, let $W_q$ be a matrix of the form $ \left(\begin{smallmatrix}qa & b \\  Nc & qd\end{smallmatrix}\right)$ with $a,b,c,d\in \ZZ$ that has determinant $q$.  The matrix $W_q$ normalizes $\Gamma_0(N)$ and thus induces an automorphism of $X_0(N)$.  Let $W(N)$ be the subgroup of $\Aut(X_0(N))$ generated by the $\{W_{p^e}\}_{p^e\parallel N}$.   The group $W(N)$ is isomorphic to $(\ZZ/2\ZZ)^{r}$ where $r$ is the number of distinct prime factors of $N$ \cite{MR0268123}*{Lemma~9}.   The group $W(N)$ permutes the cusps of $X_0(N)$ and the stabilizer of the cusp $i\infty$ is trivial.

For the newform $f$, consider the holomorphic differential form $\eta=f(z)dz$ on $X_0(N)$.  For each automorphism $w\in W(N)$, there is a $\lambda_w(f)\in \{\pm 1\}$ such that $\eta(wz)=\lambda_w(f) \eta(z)$, cf.~\cite{MR0268123}*{Theorem~3}.   Similarly, we have values $\lambda_w(g)\in \{\pm 1\}$ for $w\in W(N)$.

Let $H$ be the set of $w\in W(N)$ for which $\lambda_w(f)=\lambda_w(g)$; it is a subgroup of $W(N)$ of cardinality $2^r$ or $2^{r-1}$.  The holomorphic differential form $\omega:=(f(z)-g(z))dz$ is non-zero since $f$ and $g$ are distinct.   Let $K=\operatorname{div}(\omega)$ be the corresponding (effective) divisor on $X_0(N)$; it has degree $2g_{X_0(N)}-2$ where $g_{X_0(N)}$ is the genus of $X_0(N)$.  Therefore,
\[
{\sum}_{P} \ord_P(\omega) \leq 2g_{X_0(N)}-2
\]
where the sum is over the cusps of $X_0(N)$.  For a fixed automorphism $w\in H$, we have a cusp $P=w\cdot i\infty$.   From our choice of $H$, we find that $\omega(wz)=\pm \omega(z)$ and thus $\ord_P(\omega)=\ord_{i\infty}(\omega)$.  Therefore,
\[
2^{r-1} \ord_{i\infty}(\omega) \leq |H| \ord_{i\infty}(\omega) \leq 2g_{X_0(N)}-2 \leq \frac{N}{6}\prod_{p|N}(1+1/p) - 2^{r}
\]
where the last inequality uses an explicit formula for $g_{X_0(N)}$ \cite{MR1291394}*{Prop.~1.40} and that $X_0(N)$ has at least $2^{r}$ cusps.  Let $n$ be the smallest positive integer for which the Fourier coefficients $a_n(f)$ and $a_n(g)$ disagree.   We have $\ord_{i\infty}(\omega)=n-1$, and hence
\[
n\leq \frac{1}{2^{r}}\frac{N}{3}\prod_{p|N}(1+1/p) - 1.
\]
If $n$ is prime, then we are done.  If $n$ is composite with $a_n(f)\neq a_n(g)$, then $a_q(f)\neq a_q(g)$ for some prime $q$ dividing $n$ (since $f$ and $g$ are normalized eigenforms, we know that their Fourier coefficients are multiplicative and are defined recursively for prime powers indices).
\end{proof}

\begin{remark}
If $f$ and $g$ are distinct modular forms on $\Gamma_0(N)$ of weight $2$, then the same proof, but only looking at the cusp $i\infty$, shows that there is an integer $n\leq \frac{N}{6}\prod_{p|N}(1+\frac{1}{p})$ such that $a_n(f)\neq a_n(g)$.  This is the bound used in \cite{MR2118760} and \cite{MR1360773}; though possibly working with a larger $N$.
\end{remark}

By Lemma~\ref{L:Riemann Roch}, there is a prime $q$ satisfying (\ref{E:RR}) such that $a_q(E)=a_q(f)\neq a_q(g)=a_q(E')$.  Since $a_p(E)=a_p(E')=0$ for primes of additive reduction, we find that $E$ has either good or  multiplicative reduction at $q$.  By assumption, $E$ has no primes of multiplicative reduction, so $E$ has good reduction at $q$. 

Since $a_q(E)\neq a_q(E')=\varepsilon_\ell(\Frob_q)a_q(E)$, we deduce that $\varepsilon_\ell(\Frob_q)=-1$ and $a_q(E)\neq 0$.  By Lemma~\ref{L:interesting coset}, we find that $a_q(E)\equiv 0 \pmod{\ell}$.  The Hasse bound then implies that
\[
\ell \leq |a_q(E)|\leq 2\sqrt{q} \leq 2\sqrt{\dfrac{N}{3}\prod_{p|N}\Big(\frac{1}{2}+\dfrac{1}{2p}\Big)}
= \frac{2\sqrt{3}}{3}N^{1/2}\prod_{p|N}\Big(\frac{1}{2}+\dfrac{1}{2p}\Big)^{1/2}.
\]
Since $N$ is divisible by some prime (there is no elliptic curve over $\QQ$ with good reduction everywhere), we have $\ell  \leq \frac{2\sqrt{3}}{3}N^{1/2}(\frac{1}{2}+\frac{1}{4}\big)^{1/2}= N^{1/2}$. \\

\section{Remaining primes} \label{SS:small primes}

Fix a non-CM elliptic curve $E/\QQ$.   In this section, we explain how to determine whether $\rho_{E,\ell}$ is surjective for a fixed prime $\ell$.  Combined with  Theorem~\ref{T:rank algorithm} (or possibly Proposition~\ref{P:S non-integral}), this gives a method to compute the (finite) set of primes $\ell$ for which $\rho_{E,\ell}$ is not surjective.   

We will also discuss the surjectivity of the $\ell$-adic representations of $E$ in \S\ref{SS:l-adic}.

\subsection{Primes $\ell \leq 11$}

Let $\calE$ be the elliptic curve over $\QQ$ defined by the Weierstrass equation $y^2+y = x^3-x^2-7x+10$ and let $\OO$ be the point at infinity.    The Mordell-Weil group $\calE(\QQ)$ is an infinite cyclic group generated by the point $(4,5)$.     Let $J\colon \calE \to \AA^1_\QQ \cup \{\infty\}$ be the morphism given by
\[
J(x,y)={(f_1 f_2 f_3 f_4)^3}/({f_5^2 f_6^{11}}),
\]
where
\begin{align*}
f_1&=x^2+3x-6,
&f_2&=11(x^2-5)y+(2x^4+23x^3-72x^2-28x+127),\\
f_3&=6y+11x-19,
&f_4&=22(x-2)y+(5x^3+17x^2-112x+120), \\
f_5&=11y+(2x^2+17x-34), 
&f_6&=(x-4)y-(5x-9).
\end{align*}
For $\ell \leq 11$, the following gives a criterion to determine whether $\rho_{E,\ell}$ is surjective or not.

\begin{prop} \label{P:small criterion}
Let $E/\QQ$ be a non-CM elliptic curve.
\begin{romanenum}
\item \label{P:small criterion 2}
The representation $\rho_{E,2}$ is not surjective if and only if  $j_E = 256{(t+1)^3}/{t}$ or $j_E=t^2+1728$ for some $t\in \QQ$.
\item \label{P:small criterion 3}
The representation $\rho_{E,3}$ is not surjective if and only if  $j_E = 27{(t+1)(t+9)^3}/{t^3}$ or $j_E=t^3$ for some $t\in \QQ$.
\item \label{P:small criterion 5}
The representation $\rho_{E,5}$ is not surjective if and only if  
\[
j_E = \frac{5^3(t+1)(2t+1)^3(2t^2-3t+3)^3}{(t^2+t-1)^5},
\quad  j_E = \frac{5^2(t^2+10t+5)^3}{t^5} \quad \text{ or } \quad j_E =t^3(t^2 + 5t + 40)
\]
for some $t\in \QQ$.
\item \label{P:small criterion 7}
The representation $\rho_{E,7}$ is not surjective if and only if  
\begin{align*}
j_E &= \frac{t(t + 1)^3(t^2 - 5t + 1)^3(t^2 - 5t + 8)^3(t^4 - 5t^3 + 8t^2 - 7t + 7)^3 }{(t^3 - 4t^2 + 3t + 1)^7},\\
j_E &= \frac{64 t^3(t^2+7)^3(t^2-7t+14)^3(5t^2-14t-7)^3}{(t^3-7t^2+7t+7)^7} \quad \text{ or}\\
j_E &=\frac{(t^2+245t+2401)^3(t^2+13t+49)}{t^7}
\end{align*}
for some $t\in \QQ$.
\item \label{P:small criterion 11}
The representation $\rho_{E,11}$ is not surjective if and only if $j_E \in \{-11^2, -11\cdot 131^3 \}$ or $j_E = J(P)$ for some $P\in \calE(\QQ)-\{\OO\}$.

\item \label{P:small criterion 11b}
If $j_E$ is an integer, then $\rho_{E,11}$ is not surjective if and only if $j_E \in \{-11^2, -11\cdot 131^3 \}$.

\noindent If $j_E$ is not an integer and $\rho_{E,11}$ is not surjective, then the denominator of $j_E$ is of the form $p_1^{e_1}\cdots p_s^{e_s}$ with $p_i$ distinct primes such that $p_i \equiv \pm 1 \pmod{11}$ and $e_i\equiv 0 \pmod{11}$.
\end{romanenum}
\end{prop}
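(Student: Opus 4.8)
The plan is to rephrase non-surjectivity of $\rho_{E,\ell}$ as a condition on $j_E$ via modular curves, and then to make those modular curves explicit for each $\ell\le 11$. Since $\det\circ\rho_{E,\ell}$ is the mod $\ell$ cyclotomic character it is surjective, so $\rho_{E,\ell}$ fails to be surjective exactly when $\rho_{E,\ell}(\Gal_\QQ)$ lies in a maximal subgroup $H$ of $\GL_2(\FF_\ell)$ with $\det(H)=\FF_\ell^\times$. By the classification recalled in the proof of Proposition~\ref{P:at least 17}, such an $H$ is, up to conjugacy, a Borel subgroup, the normalizer $N_s$ or $N_{ns}$ of a split or a non-split Cartan subgroup, or---only when $\ell\equiv\pm3\pmod 8$, i.e.\ for $\ell\in\{3,5,11\}$ here---a subgroup containing the scalars whose image in $\PGL_2(\FF_\ell)$ is $\mathfrak{S}_4$. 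Every such $H$ contains $-I$, so the property that $\rho_{E,\ell}(\Gal_\QQ)$ be conjugate into $H$ is unchanged by quadratic twisting of $E$ and hence depends only on $j_E$; since $E$ is non-CM with $j_E\ne 0,1728$, it holds if and only if $j_E$ lies in the image of $X_H(\QQ)\to X(1)(\QQ)=\PP^1(\QQ)$, where $X_H$ is the corresponding modular curve. I would first trim the list of relevant $H$: for $\ell=2$ one has $\GL_2(\FF_2)\cong\mathfrak{S}_3$, with maximal subgroups the Borel $\ZZ/2$ and the non-split Cartan $\mathfrak{A}_3$; for $\ell=3$ the $\mathfrak{S}_4$-type group is all of $\GL_2(\FF_3)$ while $N_s$ is conjugate into $N_{ns}$; for $\ell=5$, $N_s$ is conjugate into the $\mathfrak{S}_4$-type group (its projective image being a $2$-Sylow of $\PGL_2(\FF_5)\cong\mathfrak{S}_5$, which lies in a point-stabilizer $\mathfrak{S}_4$); and for $\ell=7$ the $\mathfrak{S}_4$-type group does not have surjective determinant, since $\mathfrak{S}_4\subseteq\PSL_2(\FF_7)$ in this case.

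For each $\ell\le 7$, the remaining maximal $X_H$ all have genus $0$ and carry a rational cusp, hence are isomorphic over $\QQ$ to $\PP^1$; fixing a Hauptmodul $t$, one then computes the degree-$[\PGL_2(\FF_\ell):\overline{H}]$ morphism $j=\phi_H(t)$ explicitly, for instance by matching the ramification of $\phi_H$ over $j=0,1728,\infty$ with the branching data of $X_H\to X(1)$; these formulas are classical and also tabulated in the literature on modular curves of low level. Reading them off produces exactly the families in the statement: for $\ell=2$, the Borel curve $X_0(2)$ (with $j=256(t+1)^3/t$, the locus of a rational $2$-torsion point) and the non-split Cartan curve (with $j=t^2+1728$, the locus where $\QQ(E[2])/\QQ$ is cyclic, equivalently where $j_E-1728$ is a square); for $\ell=3$, the curves $X_0(3)$ and $X_{ns}^+(3)$ (the latter with $j=t^3$); for $\ell=5$, the curves $X_{ns}^+(5)$, $X_0(5)$ and the $\mathfrak{S}_4$-type curve (with $j=t^3(t^2+5t+40)$), in that order; and for $\ell=7$, the curves $X_s^+(7)$, $X_{ns}^+(7)$ and $X_0(7)$. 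Both implications of (\ref{P:small criterion 2})--(\ref{P:small criterion 7}) then follow from the twist-invariance noted above.

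The case $\ell=11$ is different because some of these curves have positive genus. The Borel curve $X_0(11)$ is an elliptic curve of conductor $11$ with $X_0(11)(\QQ)\cong\ZZ/5\ZZ$; its five rational points are two cusps, the CM point $j=-2^{15}$, and the two non-CM values $j=-11^2$ and $j=-11\cdot131^3$, which gives the first alternative of (\ref{P:small criterion 11}). The Cartan-normalizer curves $X_s^+(11)$ and $X_{ns}^+(11)$ have genus $\ge 2$, and here I would invoke the determination of their $\QQ$-rational points in the literature to conclude that these consist only of cusps and CM points, contributing no non-CM $j$-invariant. Finally the $\mathfrak{S}_4$-type group $H$ (for which $\det(H)=\FF_{11}^\times$, $-I\in H$, $|H|=240$, and $[\PGL_2(\FF_{11}):\overline{H}]=55$) has modular curve $X_H$ isomorphic over $\QQ$ to the elliptic curve $\calE$ of the statement (of conductor $121$, with $\calE(\QQ)=\ZZ\cdot(4,5)$), under which the forgetful map $X_H\to X(1)$ becomes the degree-$55$ morphism $J$; the point $\OO$, which $J$ sends to a CM $j$-invariant, is the one to discard once $E$ is non-CM. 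This establishes (\ref{P:small criterion 11}). I expect the main obstacle to be precisely this last identification---proving $X_H\cong\calE$ and verifying the explicit $J$, which can be done either by constructing $\calE$ as a quotient of a higher-level modular curve or by pinning down $J$ from its ramification over $j=0,1728,\infty$ together with its values at a few points whose mod $11$ image is known---and, secondarily, importing the rational-points results for $X_s^+(11)$ and $X_{ns}^+(11)$.

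Part (\ref{P:small criterion 11b}) follows from (\ref{P:small criterion 11}) with a bit more work. If $j_E\in\ZZ$, then by Siegel's theorem the set $\{P\in\calE(\QQ):J(P)\in\ZZ\}$ is finite, because $\calE$ has genus $1$ and $J$ is non-constant; computing this set and checking that its values are CM $j$-invariants shows that a non-CM curve with integral $j_E$ and non-surjective $\rho_{E,11}$ cannot arise from $X_H$, nor from a Cartan-normalizer curve, so it must lie on $X_0(11)$, forcing $j_E\in\{-11^2,-11\cdot131^3\}$. If instead $j_E\notin\ZZ$, the Borel and Cartan-normalizer cases are impossible (they would force $j_E\in\{-11^2,-11\cdot131^3\}$ or a CM $j$-invariant, all of which are integers), so $\rho_{E,11}(\Gal_\QQ)\subseteq H$. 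Arguing locally at a prime $p$ with $v_p(j_E)<0$ as in \S\ref{SS:special denominators}, $E/\QQ_p$ is a quadratic twist of a Tate curve, so $\rho_{E,11}$ restricted to $\Gal_{\QQ_p}$ has the shape $\sigma\mapsto\chi(\sigma)\left(\begin{smallmatrix}\alpha(\sigma)&\beta(\sigma)\\0&1\end{smallmatrix}\right)$ with $\alpha$ the mod $11$ cyclotomic character and $\chi$ quadratic. For $\sigma$ in inertia one has $\alpha(\sigma)=1$, so $\rho_{E,11}(\sigma)=\pm\left(\begin{smallmatrix}1&\beta(\sigma)\\0&1\end{smallmatrix}\right)$; since $|H|=240$ is prime to $11$ this element has order prime to $11$, forcing $\beta(\sigma)=0$. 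Hence $\beta$ is unramified at $p$, so $q^{1/11}\in\QQ_p^{\un}$ and $11\mid v_p(q)=-v_p(j_E)$, which is the divisibility assertion. Finally $\rho_{E,11}(\Frob_p)$ is upper triangular with eigenvalue ratio $\alpha(\Frob_p)\equiv p\pmod{11}$; its image in $\PGL_2(\FF_{11})$ lies in $\mathfrak{S}_4$, hence has order in $\{1,2,3,4\}$, and since this order also divides $|\FF_{11}^\times|=10$ it is $1$ or $2$, giving $p\equiv\pm1\pmod{11}$.
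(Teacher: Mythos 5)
Your approach for $\ell\le 7$ — classify maximal subgroups with surjective determinant, trim redundant ones, and parameterize each remaining genus-$0$ modular curve by its Hauptmodul — is sound and matches the paper's approach in spirit (the paper just cites \cite{Zywina-images} for the explicit families rather than deriving them). Your observations about $N_s$ being swallowed by $N_{ns}$ for $\ell=3$ and by the $\mathfrak{S}_4$-type group for $\ell=5$, and about the $\mathfrak{S}_4$-type group having non-surjective determinant for $\ell=7$, are all correct.

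The $\ell=11$ case, however, contains a genuine error. You assert that $X_{ns}^+(11)$ has genus $\ge 2$ with only cuspidal and CM rational points, and that the curve $\calE$ and degree-$55$ map $J$ of the statement realize the $\mathfrak{S}_4$-type modular curve. This is backwards. The curve $X_{ns}^+(11)$ has genus $1$; it is precisely Halberstadt's curve (conductor $121$, Mordell--Weil group $\ZZ\cdot(4,5)$), and $\calE$ together with the degree-$55$ map $J$ is Halberstadt's explicit model of $X_{ns}^+(11)\to X(1)$, as the paper states by citing \cite{MR1677158}. Since $\calE(\QQ)$ has rank $1$, the non-split Cartan normalizer actually contributes \emph{infinitely} many non-CM $j$-invariants — this is exactly why part (\ref{P:small criterion 11}) is phrased with a parameterization rather than a finite list, and why part (\ref{P:small criterion 11b}) needs the Schoof--Tzanakis integral-points result \cite{MR2869209} (which studies $X_{ns}^+(11)$, not an $\mathfrak{S}_4$ curve). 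Conversely, it is the split Cartan normalizer $X_s^+(11)$ and the $\mathfrak{S}_4$-type curve whose rational points contribute nothing non-CM and non-cuspidal; your paragraph effectively swaps the roles of $X_{ns}^+(11)$ and $X_{\mathfrak{S}_4}(11)$. The misattribution propagates into part (\ref{P:small criterion 11b}): your ``Siegel plus computation'' step and the Tate-curve local analysis would, as it happens, yield the stated conclusion for whichever index-$55$ subgroup of order $240$ is in play, but the logical route is scrambled, and the claim that the Borel and ``Cartan-normalizer'' cases force an integral $j$ is false for $X_{ns}^+(11)$. You need to identify $\calE$ and $J$ with $X_{ns}^+(11)$, show (or cite) that $X_s^+(11)(\QQ)$ and $X_{\mathfrak{S}_4}(11)(\QQ)$ contain no non-CM non-cuspidal points, and then run the integral-points and Tate-curve arguments on $X_{ns}^+(11)$.
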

\begin{proof}
Parts (\ref{P:small criterion 2})--(\ref{P:small criterion 11}) are consequence of the theorems from \cite{Zywina-images}; one need only consider the maximal subgroup of $\GL_2(\FF_\ell)$.   Note that the normalizer of a split Cartan subgroup in $\GL_2(\FF_3)$ is not a maximal subgroup.  The normalizer of a split Cartan subgroup in $\GL_2(\FF_5)$ lies in a maximal subgroup of $\GL_2(\FF_5)$ whose image in $\PGL_2(\FF_5)$ is isomorphic to $\mathfrak{S}_4$.

The curve $\calE$ and the map $J$ come from Halberstadt's description of $X_{\operatorname{ns}}^+(11)$ in \cite{MR1677158}.  In particular, the group $\rho_{E,11}(\Gal_\QQ)$ is conjugate to a subgroup of the normalizer of a non-split Cartan subgroup of $\GL_2(\FF_{11})$ if and only if $j_E = J(P)$ for some $P\in \calE(\QQ)-\{\OO\}$.    In \cite{MR2869209}, it is shown that if $J(P)$ is an integer with $P\in \calE(\QQ)-\{\OO\}$, then $J(P)$ is the $j$-invariant of a CM elliptic curve; this proves the first part of (\ref{P:small criterion 11b}).   For the second part of (\ref{P:small criterion 11b}), note that the proof of Theorem~\ref{T:special den} applies verbatim.
\end{proof}

\begin{remark}
In \cite{Zywina-images}, we give explicit polynomials $A,B,C\in \QQ[X]$ of degree $55$ such that $j_E=J(P)$ for some point $P \in \calE(\QQ)-\{\OO\}$ if and only if the polynomial  $A(X)j_E^2+B(X)j_E +C(X) \in \QQ[X]$ has a root.  So it straightforward to determine whether $j_E=J(P)$ for some $P\in \calE(\QQ)-\{\OO\}$.  
\end{remark}

\subsection{The prime $\ell = 13$}

\begin{prop} \label{P:criterion 13}
\begin{romanenum}
\item \label{P:criterion 13 i}
The representation $\rho_{E,13}$ is not surjective if 
\begin{align*}
j_E &= {2^4\cdot 5\cdot 13^4\cdot 17^3}/{3^{13}}, \\ 
j_E&=-{2^{12}\cdot 5^3\cdot 11\cdot 13^4}/{3^{13}}, \\
j_E&={2^{18}\cdot3^3\cdot 13^4\cdot 127^3\cdot 139^3\cdot 157^3\cdot 283^3\cdot 929}/(5^{13}\cdot 61^{13}),\quad \text{ or}\\
j_E&=  {(t^2+5t+13)(t^4+7t^3+20t^2+19t+1)^3}/{t} \quad \text{ for some $t\in \QQ$}.
\end{align*}
\item \label{P:criterion 13 ii}
The representation $\rho_{E,13}$ is surjective if and only if all the following conditions hold:
\begin{itemize}
\item
there is a prime $p\nmid 13N_E$ such that $a_p(E)\not\equiv  0 \pmod{13}$ and such that $a_p(E)^2-4p$ is a non-zero square modulo $13$,
\item
there is a prime $p\nmid 13N_E$ such that $a_p(E)\not\equiv  0 \pmod{13}$ and such that $a_p(E)^2-4p$ is a non-square modulo $13$,
\item
there is prime $p\nmid 13 N_E$ such that the image of $a_p(E)^2/p$ in $\FF_{13}$ is not $0$, $1$, $2$ and $4$, and is not a root of $x^2-3x+1$.
\end{itemize}
\end{romanenum}
\end{prop}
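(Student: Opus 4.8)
The plan is to handle the two parts by different means. Part~(\ref{P:criterion 13 i}) is a statement about rational points on the modular curves of level $13$ attached to the proper maximal subgroups of $\GL_2(\FF_{13})$, whereas part~(\ref{P:criterion 13 ii}) is a group-theoretic dictionary relating the conjugacy class of $g:=\rho_{E,13}(\Frob_p)$ to the pair $\big(\tr g,\det g\big)\equiv(a_p(E),p)\pmod{13}$, made uniform over $p$ via the Chebotarev density theorem. Note that $\ell=13$ is not covered by Proposition~\ref{P:at least 17} (which needs $\ell>13$); nonetheless, since $\det\circ\rho_{E,13}$ is surjective, Serre's classification of subgroups of $\GL_2(\FF_{13})$ shows that a non-surjective $\rho_{E,13}$ has image, up to conjugacy, in a Borel subgroup, in the normalizer of a split Cartan subgroup, in the normalizer of a non-split Cartan subgroup, or (as $13\equiv -3\pmod 8$) in a subgroup containing the scalars whose image in $\PGL_2(\FF_{13})$ is $\mathfrak S_4$ --- but now \emph{none} of these four possibilities can be excluded a priori.

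For part~(\ref{P:criterion 13 i}) I would check, for each displayed condition, that it forces the image of $\rho_{E,13}$ into one of these proper subgroups. The one-parameter family is the classical degree-$14$ map $X_0(13)\to X(1)$: a non-CM curve $E/\QQ$ has mod-$13$ image in a Borel subgroup if and only if it has a rational $13$-isogeny, a condition that depends only on $j_E$ (scalars act trivially on $\PP^1$, so it is twist-invariant) and holds exactly when $j_E$ lies in the image of this map. The three sporadic values are the images under the corresponding $j$-maps of explicit rational points on $X^+_{\mathrm{sp}}(13)$, $X^+_{\mathrm{ns}}(13)$ and $X_{\mathfrak S_4}(13)$, which one writes down directly (models are recorded in \cite{Zywina-images}); for the stated ``if'' direction, exhibiting these points is all that is needed. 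That the list is complete I would add only as a remark: $X^+_{\mathrm{sp}}(13)$ and $X^+_{\mathrm{ns}}(13)$ are the genus-$3$ ``cursed'' curves whose rational points were found by Balakrishnan, Dogra, M\"uller, Tuitman and Vonk (the non-split curve being identified with the split one by Baran's exceptional isomorphism) and consist only of cusps and CM points, while the rational points of the curve $X_{\mathfrak S_4}(13)$ attached to the $\mathfrak S_4$-type subgroup were analysed by Banwait and Cremona; together these isolate exactly the three sporadic non-cuspidal, non-CM $j$-invariants.

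For part~(\ref{P:criterion 13 ii}), fix a good prime $p\nmid 13N_E$ and set $g=\rho_{E,13}(\Frob_p)$, so $\tr g\equiv a_p(E)$ and $\det g\equiv p\pmod{13}$. The key dictionary is: the characteristic polynomial $x^2-a_p(E)x+p$ of $g$ has a root in $\FF_{13}$ iff $a_p(E)^2-4p$ is a square; $g$ is semisimple with two distinct eigenvalues in $\FF_{13}$ iff $a_p(E)^2-4p$ is a nonzero square; $g$ has irreducible characteristic polynomial iff $a_p(E)^2-4p$ is a non-square (hence nonzero); and the order of the image of $g$ in $\PGL_2(\FF_{13})$ is determined by the image of $a_p(E)^2/p$ in $\FF_{13}$, the values $4,0,1,2$ corresponding to projective orders $1,2,3,4$ and the two roots of $x^2-3x+1$ to projective order $5$ (this last case is vacuous over $\FF_{13}$, since $5$ divides neither $13-1$ nor $13+1$, but it is natural to include). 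It follows that: if the image of $\rho_{E,13}$ lies in a Borel, or in a split Cartan normalizer, then for every good $p$ one has $a_p(E)\equiv 0$ or $a_p(E)^2-4p$ a square, so the second bullet fails; if it lies in a non-split Cartan normalizer, then for every good $p$ one has $a_p(E)\equiv 0$ or $a_p(E)^2-4p$ a non-square or $0$, so the first bullet fails; and if its image in $\PGL_2(\FF_{13})$ lies in $\mathfrak S_4$, then $a_p(E)^2/p\in\{0,1,2,4\}$ for every good $p$, so the third bullet fails. Hence the three bulleted conditions together force $\rho_{E,13}$ to be surjective. For the converse I would exhibit in $\GL_2(\FF_{13})$ an element that is diagonalizable with distinct, non-opposite eigenvalues (realizing the first bullet), one with irreducible characteristic polynomial and nonzero trace (the second), and one of projective order $14$, a suitable power of a generator of a non-split Cartan subgroup (the third); then, assuming $\rho_{E,13}$ surjective, Chebotarev produces good primes $p\nmid 13N_E$ whose Frobenius hits each of these conjugacy classes.

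The real difficulty lies entirely in the input to part~(\ref{P:criterion 13 i}): determining the rational points of the three positive-genus modular curves of level $13$, above all the ``cursed'' curves $X^+_{\mathrm{sp}}(13)\cong X^+_{\mathrm{ns}}(13)$, where ordinary Chabauty does not apply and one must appeal to quadratic Chabauty; I would cite these results rather than reprove them. Part~(\ref{P:criterion 13 ii}) is then elementary given the list of maximal subgroups of $\GL_2(\FF_{13})$; the only subtleties are (a) reading ``non-square'' as ``not a square'', hence as excluding $0$ --- which is precisely what makes the first and second bullets separate the non-split Cartan normalizer from the Borel and the split Cartan normalizer --- and (b) noting that because $13\equiv-3\pmod 8$ the $\mathfrak S_4$-type subgroup genuinely has surjective determinant, so a third condition really is needed.
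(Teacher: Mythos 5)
Your overall plan matches the paper's, which disposes of the proposition in two sentences: part~(\ref{P:criterion 13 i}) is quoted from \cite{Zywina-images}, with the three sporadic $j$-invariants coming from \cite{cremona-banwait}, and part~(\ref{P:criterion 13 ii}) is exactly Serre's Proposition~19 of \cite{MR0387283} together with Chebotarev. Your treatment of part~(\ref{P:criterion 13 ii}) is a correct and fully spelled-out version of that argument: the dictionary between the pair $(\tr g, \det g)$, the splitting of the characteristic polynomial, and the projective order via $u=\tr(g)^2/\det(g)$ is exactly what underlies Serre's criterion, and your observations about why the non-square condition must exclude $0$ and why $13\equiv-3\pmod 8$ forces a genuine third bullet are both on target.

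There is, however, a factual error in your account of part~(\ref{P:criterion 13 i}). You attribute the three sporadic $j$-invariants to ``explicit rational points on $X^+_{\mathrm{sp}}(13)$, $X^+_{\mathrm{ns}}(13)$ and $X_{\mathfrak S_4}(13)$,'' one from each. In fact all three arise from non-cuspidal non-CM rational points on the single curve $X_{\mathfrak S_4}(13)$; that curve, its rational points, and these three $j$-invariants are precisely the content of Banwait--Cremona, which is why the paper cites \cite{cremona-banwait} for exactly those three values. The split and non-split Cartan curves at level~$13$ (the ``cursed'' curves you mention) contribute \emph{no} sporadic $j$-invariants at all; their rational points are only cusps and CM points. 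Moreover, this last fact was established by Balakrishnan--Dogra--M\"uller--Tuitman--Vonk only after the present paper was written, which is exactly why part~(\ref{P:criterion 13 i}) is stated as a one-way implication rather than an equivalence. So your ``completeness remark'' is correct in content but anachronistic relative to what the paper actually uses, and your assignment of the sporadic values to the three modular curves should be corrected: they are all $\mathfrak S_4$-exceptional, none split or non-split Cartan.
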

\begin{proof}
Part (\ref{P:criterion 13 i}) is explained in \cite{Zywina-images}; the first three exceptional $j$-invariants come from \cite{cremona-banwait}.  Part (\ref{P:criterion 13 ii}) is a direct consequence of {Proposition~19} of \cite{MR0387283} and the Chebotarev density theorem.
\end{proof}

Consider a non-CM elliptic curve $E/\QQ$.   Suppose that $j_E$ is not one of those given in Proposition~\ref{P:criterion 13}(\ref{P:criterion 13 i}); if it was then $\rho_{E,13}$ would not be surjective.   Conjecturally, the representation $\rho_{E,13}$ will be surjective and hence this should be checkable  using  the criterion of Proposition~\ref{P:criterion 13}(\ref{P:criterion 13 ii}).\\

If the surjectivity is unknown even after computing $a_p(E)$ for many primes $p\nmid 13N_E$, then one can do a direct computation.   The representation $\rho_{E,13}$ is surjective if and only if the image of $\rho_{E,13}(\Gal_\QQ)$ in $\GL_2(\FF_{13})/\{\pm I\}$ is the full group $\GL_2(\FF_{13})/\{\pm I\}$.   For a given Weierstrass equation $y^2=x^3+ax+b$ for $E/\QQ$ one can compute the division polynomial of $E$ at the prime $13$; it is the monic polynomial $f(X)\in \QQ[X]$ whose roots are the $x$-coordinates of the elements of order $13$ in $E(\Qbar)$.  The Galois group of $f(x)$ is isomorphic to the image of $\rho_{E,13}(\Gal_\QQ)$ in $\GL_2(\FF_{13})/\{\pm I\}$ and be computed directly.    (For example, this was how the author found the interesting $j$-invariants ${2^4\cdot 5\cdot 13^4\cdot 17^3}/{3^{13}}$ and $-{2^{12}\cdot 5^3\cdot 11\cdot 13^4}/{3^{13}}$ before \cite{cremona-banwait} was available.)

Alternatively, if $\rho_{E,13}$ was not surjective, then one could construct a new rational point on one of the explicit genus $3$ curves in  \cite{cremona-banwait} or \cite{Baran-13}.

\subsection{A surjectivity criterion for primes $\ell >13$} \label{SS:criterion 13}
Fix a prime $\ell>13$.

\begin{prop}  \label{P:criterion ell large}
The representation $\rho_{E,\ell}$ is surjective if and only if $(\ell,j_E)\notin \scrS$ and there is a prime $p\nmid N_E \ell$ such that $a_p(E)\not\equiv 0 \pmod{\ell}$ and $a_p(E)^2-4p$ is a non-zero square modulo $\ell$.	
\end{prop}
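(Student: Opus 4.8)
The plan is to prove both implications, using the classification of non-surjective images from Proposition~\ref{P:at least 17} together with a Chebotarev argument. First the easy direction: suppose $\rho_{E,\ell}$ is surjective. Then certainly $(\ell,j_E)\notin S_0$, since for pairs in $S_0$ the curve admits an $\ell$-isogeny and $\rho_{E,\ell}$ is not surjective. For the existence of the prime $p$, note that $\GL_2(\FF_\ell)$ contains an element $g$ with $\det(g)\notin\{0\}$ a prescribed nonzero value modulo $\ell$ — more precisely, since $\ell>13>3$ we can find $g\in\GL_2(\FF_\ell)$ whose characteristic polynomial $T^2-aT+b$ has $a\not\equiv 0$ and discriminant $a^2-4b$ a nonzero square in $\FF_\ell$ (i.e. $g$ is regular semisimple, split, with nonzero trace; such conjugacy classes are plentiful once $\ell$ is not tiny). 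By surjectivity $g$ lies in $\rho_{E,\ell}(\Gal_\QQ)$, and by the Chebotarev density theorem there are infinitely many primes $p\nmid N_E\ell$ with $\rho_{E,\ell}(\Frob_p)$ conjugate to $g$; any such $p$ satisfies $a_p(E)\equiv a\not\equiv 0\pmod\ell$ and $a_p(E)^2-4p\equiv a^2-4b$, a nonzero square.

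For the converse, suppose $(\ell,j_E)\notin S_0$ and such a prime $p$ exists but, for contradiction, $\rho_{E,\ell}$ is not surjective. By Proposition~\ref{P:at least 17}, the image $H:=\rho_{E,\ell}(\Gal_\QQ)$ lies in the normalizer $N$ of a non-split Cartan subgroup $C$. Now examine $\rho_{E,\ell}(\Frob_p)=:g$. If $g\in N-C$, then $\tr(g)=0$, contradicting $a_p(E)\not\equiv 0\pmod\ell$. So $g\in C$. But $C\cong\FF_{\ell^2}^\times$ acting on $\FF_{\ell^2}$; the eigenvalues of any element of $C$ are a conjugate pair $\{x,x^\ell\}$ with $x\in\FF_{\ell^2}$, and such an element is split over $\FF_\ell$ only when $x\in\FF_\ell$, in which case $g$ is scalar and $a_p(E)^2-4p\equiv (\tr g)^2-4\det g=(2x)^2-4x^2=0\pmod\ell$ — contradicting that $a_p(E)^2-4p$ is a \emph{nonzero} square. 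Equivalently and more directly: $a_p(E)^2-4p\equiv (\tr g)^2-4\det g$ is the discriminant of the characteristic polynomial of $g$; for $g\in C$ this discriminant is $(x-x^\ell)^2$, which is a square in $\FF_\ell$ only if it equals $0$ (since $x-x^\ell\in\FF_{\ell^2}$ has $(x-x^\ell)^\ell=x^\ell-x=-(x-x^\ell)$, so $x-x^\ell$ lies in the non-square coset of $\FF_\ell^\times$ inside $\FF_{\ell^2}^\times$ whenever it is nonzero, forcing its square to be a non-square in $\FF_\ell$). Either way we contradict the hypothesis that $a_p(E)^2-4p$ is a nonzero square modulo $\ell$. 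Hence $\rho_{E,\ell}$ must be surjective.

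The main obstacle is the second bullet of the converse: making precise the statement that a non-scalar element of a non-split Cartan subgroup has characteristic-polynomial discriminant equal to a non-square in $\FF_\ell$ (so that "nonzero square" forces us out of the Cartan, and "nonzero" already forces us out of the scalars). This is the arithmetic heart — it is exactly the observation that $\FF_{\ell^2}=\FF_\ell(\sqrt{\epsilon})$ for a non-square $\epsilon$, and that in the explicit model $\left(\begin{smallmatrix}a&b\epsilon\\ b&a\end{smallmatrix}\right)$ from \S\ref{S:background} the discriminant of the characteristic polynomial is $(\tr)^2-4\det=4a^2-4(a^2-b^2\epsilon)=4b^2\epsilon$, visibly $\epsilon$ times a square, hence a non-square exactly when $b\neq 0$ (the non-scalar case). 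Once this is in hand, the trace condition handles the coset $N-C$ and the rest is a routine application of Chebotarev for the forward direction.
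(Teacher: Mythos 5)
Your proof is correct and follows essentially the same route as the paper: reduce via Proposition~\ref{P:at least 17} to excluding the normalizer of a non-split Cartan, use the trace condition to rule out $N-C$ and the discriminant condition (zero or non-square on $C$) to rule out $C$, and invoke Chebotarev for the forward direction. The paper states the fact that $\tr(A)^2-4\det(A)$ is zero or a non-square for $A\in C$ without proof; your explicit verification via the matrix model $\left(\begin{smallmatrix}a&b\epsilon\\ b&a\end{smallmatrix}\right)$ giving discriminant $4b^2\epsilon$ is exactly the right way to fill that in.
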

\begin{proof}
As noted in the introduction, the representation $\rho_{E,\ell}$ is not surjective when $(\ell,j_E)\in \scrS$.  So assume that $(\ell,j_E)\notin \scrS$.  First suppose that there is a prime $p\nmid N_E \ell$ such that $a_p(E)\not\equiv 0 \pmod{\ell}$ and $a_p(E)^2-4p$ is a non-zero square modulo $\ell$.   With $g:=\rho_{E,\ell}(\Frob_p)$, we have $\tr(g)\neq 0$ and $\tr(g)^2-4\det(g)$ is a non-zero square.    Let $N$ be the normalizer of a non-split Cartan subgroup $C$ of $\GL_2(\FF_\ell)$.   For all $A\in N-C$, we have $\tr(A)=0$.   For all $A\in C$, the value $\tr(A)^2-4\det(A) \in \FF_\ell$ is either zero or a non-square.  So $g\notin N$, and hence $\rho_{E,\ell}(\Gal_\QQ)$ is not a subgroup of the normalizer of a non-split Cartan.  Therefore, $\rho_{E,\ell}$ is surjective by Proposition~\ref{P:at least 17}.

Now suppose that $\rho_{E,\ell}$ is surjective.   There are matrices $A\in \GL_2(\FF_\ell)$ so that $\tr(A)\neq 0$ and $\tr(A)^2-4\det(A)$ is a non-zero square.   That primes $p$ as in the statement of the proposition occur is then a consequence of the Chebotarev density theorem. 
\end{proof}

Assuming that Conjecture~\ref{C:new} holds, the criterion of Proposition~\ref{P:criterion ell large} will always apply for some $p$ and prove that $\rho_{E,\ell}$ is surjective when $(\ell,j_E)\notin \scrS$.    Using an explicit version of the Chebotarev density theorem, one could even give a bound for $p$.

However, if $(\ell,j_E)\notin \scrS$ and the surjectivity is unknown even after computing $a_p(E)$ for many primes $p\nmid N_E \ell$, then one can do a direct computation.   The representation $\rho_{E,\ell}$ is surjective if and only if the image of $\rho_{E,\ell}(\Gal_\QQ)$ in $\GL_2(\FF_{\ell})/\{\pm I\}$ is the full group $\GL_2(\FF_{\ell})/\{\pm I\}$.   For a given Weierstrass equation $y^2=x^3+ax+b$ for $E/\QQ$ one can compute the division polynomial of $E$ at the prime $\ell$; it is the monic polynomial $f(X)\in \QQ[X]$ whose roots are the $x$-coordinates of the elements of order $\ell$ in $E(\Qbar)$.  The Galois group of $f(x)$ is isomorphic to the image of $\rho_{E,\ell}(\Gal_\QQ)$ in $\GL_2(\FF_\ell)/\{\pm I\}$ and be computed directly.

\subsection{$\ell$-adic surjectivity}  \label{SS:l-adic}

For each integer $n\geq 1$, let $E[\ell^n]$ be the group of $\ell^n$-torsion in $E(\Qbar)$.  The Tate module $T_\ell(E)$ of $E$ is the inverse limit of the groups $E[\ell^n]$ with respect to the transition maps $E[\ell^{n+1}]\to E[\ell^{n}]$, $P\mapsto \ell P$.  The Tate module $T_\ell(E)$ is a free $\ZZ_\ell$-module of rank $2$ with a natural $\Gal_\QQ$-action.   Let $\rho_{E,\ell^\infty}\colon \Gal_\QQ \to \Aut_{\ZZ_\ell}(T_\ell(E))\cong \GL_2(\ZZ_\ell)$ be the representation describing this Galois action.    Using the results of this paper, and the following lemma, it is straightforward to compute the (finite) set of primes $\ell$ for which $\rho_{E,\ell^\infty}$ is not surjective.

\begin{lemma} Let $E/\QQ$ be a non-CM elliptic curve.
\begin{romanenum}
\item
The representation $\rho_{E,2^\infty}$ is not surjective if and only if $\rho_{E,2}$ is not surjective or $j_E$ is of the form 
\[
-4t^3(t+8),\quad -t^2+1728, \quad 2t^2+1728 \quad \text{or}\quad -2t^2+1728
\] 
for some $t\in \QQ$.
\item
The representation $\rho_{E,3^\infty}$ is not surjective if and only if $\rho_{E,3}$ is not surjective or 
\[
j_E = -\frac{3^7 (t^2-1)^3 (t^6+3t^5+6t^4+t^3-3t^2+12t+16)^3 (2t^3+3t^2-3t-5)}{(t^3-3t-1)^9}
\]
for some $t\in \QQ$.
\item
If $\ell \geq 5$, then $\rho_{E,\ell^\infty}$ is not surjective if and only if $\rho_{E,\ell}$ is not surjective.

\end{romanenum}
\end{lemma}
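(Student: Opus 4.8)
The plan is to reduce everything to a \emph{fixed finite level} and then to a finite computation with subgroups of $\GL_2(\ZZ/\ell^k)$. In every case one implication is trivial: surjectivity of $\rho_{E,\ell^\infty}$ forces surjectivity of its reduction $\rho_{E,\ell}$, and each displayed family of $j$-invariants will be shown to force \emph{non}-surjectivity of $\rho_{E,\ell^\infty}$ by exhibiting an explicit proper subgroup of $\GL_2(\ZZ_\ell)$ through which it factors. So the content is the converse direction. Throughout I would write $G_n=\rho_{E,\ell^n}(\Gal_\QQ)\subseteq\GL_2(\ZZ/\ell^n)$ and $G=\rho_{E,\ell^\infty}(\Gal_\QQ)\subseteq\GL_2(\ZZ_\ell)$, and use that $\det G=\ZZ_\ell^\times$, since $\det\circ\rho_{E,\ell^\infty}$ is the $\ell$-adic cyclotomic character.

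For part (iii) I would invoke Serre's closed-subgroup lemma \cite{MR0387283}: for $\ell\geq 5$, a closed subgroup of $\GL_2(\ZZ_\ell)$ with full determinant that surjects onto $\GL_2(\FF_\ell)$ is all of $\GL_2(\ZZ_\ell)$. The proof is the standard one: $\SL_2(\FF_\ell)$ is perfect for $\ell\geq 5$, and each reduction kernel $\ker(\SL_2(\ZZ/\ell^{n+1})\to\SL_2(\ZZ/\ell^n))$ is a nonzero $\FF_\ell[\SL_2(\FF_\ell)]$-module with no trivial quotient, so mod-$\ell$ surjectivity of $G\cap\SL_2(\ZZ_\ell)$ propagates up the $\ell$-adic tower by a Frattini/Nakayama argument. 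Applying this to $G$ gives $G=\GL_2(\ZZ_\ell)$, which is part (iii).

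For parts (i) and (ii) the group $\SL_2(\FF_\ell)$ is not perfect ($\ell=2,3$), so mod-$\ell$ surjectivity is insufficient, and I would instead use the known refinement pinning down the finite level at which surjectivity stabilizes: a closed subgroup of $\GL_2(\ZZ_\ell)$ with full determinant surjecting onto $\GL_2(\ZZ/\ell^{k})$ --- with $k=3$ for $\ell=2$ (Dokchitser--Dokchitser) and $k=2$ for $\ell=3$ (standard) --- equals $\GL_2(\ZZ_\ell)$. Hence $\rho_{E,\ell^\infty}$ is surjective if and only if $\rho_{E,\ell^{k}}$ is. So if $\rho_{E,\ell}$ is surjective but $\rho_{E,\ell^\infty}$ is not, then $G_{k}$ is a \emph{proper} subgroup of $\GL_2(\ZZ/\ell^{k})$ with full determinant that still surjects onto $\GL_2(\FF_\ell)$. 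There are, up to conjugacy, only finitely many such subgroups; I would enumerate them and their maximal representatives $H$, check that each associated modular curve $X_H$ has genus $0$ and a rational point --- hence $X_H\cong\PP^1_\QQ$ --- and write down the resulting parametrization $t\mapsto j$ of the composite $X_H\to X(1)$. The set of $j_E$ for which $\rho_{E,\ell^\infty}$ is non-surjective while $\rho_{E,\ell}$ is surjective is then exactly the union of the images of these parametrizations; matching them with the explicit classification of $2$-adic and $3$-adic images of Galois for elliptic curves over $\QQ$ (Rouse--Zureick-Brown for $\ell=2$, Rouse--Sutherland--Zureick-Brown for $\ell=3$, both building on \cite{Zywina-images}) produces precisely the four families in (i) and the single family in (ii). For the converse for these families, I would read off the containing proper subgroup of $\GL_2(\ZZ_\ell)$ directly from the same moduli description. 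As a sanity check, the family $j_E=t^2+1728$ is \emph{absent} from (i) because it already makes $\rho_{E,2}$ itself non-surjective (Proposition~\ref{P:small criterion}), so it is absorbed into the first alternative.

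The main obstacle is the classification in the previous paragraph: determining the finitely many proper subgroups of $\GL_2(\ZZ/8)$ (resp.\ $\GL_2(\ZZ/9)$) with full determinant surjecting onto $\GL_2(\FF_2)$ (resp.\ $\GL_2(\FF_3)$), verifying which of the maximal ones have genus-$0$ modular curves with infinitely many rational points, and computing their $j$-maps. Everything else --- the triviality of one implication and the reduction, via the refined Serre lemma, to a fixed finite level --- is formal. A secondary point requiring care is the bookkeeping with $-I$: the enumeration must be carried out inside $\GL_2$ rather than $\PGL_2$, so that, for instance, the distinction between the families $2t^2+1728$ and $-2t^2+1728$ in (i) is retained.
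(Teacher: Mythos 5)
Your proposal is correct and matches what the paper does: the paper's proof consists solely of citations to Serre's closed-subgroup lemma (\cite{MR1484415}, IV~\S3.4) for $\ell\geq5$, to Dokchitser--Dokchitser \cite{MR2995149} for $\ell=2$, and to Elkies \cite{elkies2006elliptic} for $\ell=3$, and your outline is precisely the content of those references --- the reduction to a fixed finite level (mod~$8$, resp.\ mod~$9$), the enumeration of proper subgroups of $\GL_2(\ZZ/\ell^{k})$ with full determinant and surjective mod-$\ell$ reduction, and the genus-$0$ parametrizations of the associated modular curves. Your sanity check that $j_E=t^2+1728$ is absorbed into the mod~$2$ non-surjectivity criterion rather than appearing as a strictly $2$-adic family is also correct.
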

\begin{proof}
For the $2$-adic and $3$-adic cases, see \cite{MR2995149} and \cite{elkies2006elliptic}, respectively.    When $\ell \geq 5$, the lemma follows from Lemma~3.4 of \cite{MR1484415}*{IV~\S3.4}.
\end{proof}

\appendix
\section{Some code} \label{S:some code}

Given a non-CM elliptic curve $E/\QQ$, the following \texttt{Magma} \cite{Magma} function outputs a finite set of primes $S$ such that the representation $\rho_{E,\ell}$ is surjective for all primes $\ell\notin S$.    It uses the algorithm of \S\ref{SS:method} if $j_E$ is an integer and uses \S\ref{SS:non-integral} otherwise.   (Note that we could then use \S\ref{SS:small primes} to quickly determine the minimal such set $S$)  \\

{ \smaller
\begin{verbatimtab}[5]
ExceptionalSet:=function(E)  
    j:=jInvariant(E); 
    den:=Denominator(j);
    S:={2,3,5,7,11,13};	
    if j in {-297756989/2, -882216989/131072} then  S:=S join {17};  end if;                                       	
    if j in {-9317, -162677523113838677} then  S:=S join {37}; end if;

    if den ne 1 then 
        ispow,b,e:=IsPower(den);                
        if ispow then                       
            g:=GCD([e] cat [p^2-1: p in Divisors(b)]);
            S:=S join {ell: ell in PrimeDivisors(g)};
        end if;
    else
        Q:=PrimeDivisors(Numerator(j-1728));
        Q:=[q: q in Q | q ne 2 and IsOdd(Valuation(j-1728,q))];
        if Valuation(j,2) in {3,6,9} then Q:=[2] cat Q; end if;

        p:=2;
        alpha:=[]; beta:=[];
        repeat  
            a:=0;   
            while a eq 0 do
                repeat  p:=NextPrime(p); until p notin Q;
                K:=KodairaSymbol(E,p);
                if K eq KodairaSymbol("I0") then
                    a:=TraceOfFrobenius(E,p);
                elif K eq KodairaSymbol("I0*") then
                    a:=TraceOfFrobenius(QuadraticTwist(E,p),p);
                end if;
            end while;                   
            S:=S join {ell : ell in PrimeDivisors(a)};

            alpha:= alpha cat [[(1-KroneckerSymbol(q,p)) div 2 : q in Q]];   
            beta:= beta cat [ [(1-KroneckerSymbol(-1,p)) div 2] ];
            A:=Matrix(GF(2),alpha);  
            b:=Matrix(GF(2),beta);              
        until IsConsistent(Transpose(A),Transpose(b)) eq false; 	
    end if;	

    return S;
end function;
\end{verbatimtab}
}

For each non-CM elliptic curve $E/\QQ$ in Cremona's database \cite{cremona}, the following code computes a finite set of primes $S$ such that if $\rho_{E,\ell}$ is not surjective with $\ell>13$, then $\ell\in S$.  Cremona's database currently consists of elliptic curves of conductor at most $500000$, and our code actually verifies Conjecture~\ref{C:new} for all such non-CM curves.

{\small
\begin{verbatimtab}
D:=CremonaDatabase();   LargestConductor(D);
for N in [1..LargestConductor(D)] do
for E in EllipticCurves(D,N) do
if not HasComplexMultiplication(E) then
        S:={p: p in ExceptionalSet(E) | p gt 13};
        if jInvariant(E) in {-297756989/2, -882216989/131072} then
           assert S eq {17};
        elif jInvariant(E) in {-9317, -162677523113838677} then
           assert S eq {37};
        elif S ne {} then
           print E, S;
        end if;
end if;
end for;
end for;
\end{verbatimtab}
}

\begin{bibdiv}
\begin{biblist}

\bib{MR0268123}{article}{
      author={Atkin, A. O.~L.},
      author={Lehner, J.},
       title={Hecke operators on {$\Gamma _{0}(m)$}},
        date={1970},
     journal={Math. Ann.},
      volume={185},
       pages={134\ndash 160},
}

\bib{cremona-banwait}{article}{
   author={Banwait, Barinder S.},
   author={Cremona, John E.},
   title={Tetrahedral elliptic curves and the local-global principle for
   isogenies},
   journal={Algebra Number Theory},
   volume={8},
   date={2014},
   number={5},
   pages={1201--1229},
   issn={1937-0652},
   review={\MR{3263141}},
}

\bib{Baran-13}{article}{
      author={Baran, Burcu},
      title={An exceptional isomorphism between modular curves of level 13},
      date={2014},
      journal={J. Number Theory},
      volume={145},
       pages={273\ndash 300},
}

\bib{Magma}{article}{
      author={Bosma, Wieb},
      author={Cannon, John},
      author={Playoust, Catherine},
       title={The {M}agma algebra system. {I}. {T}he user language},
        date={1997},
     journal={J. Symbolic Comput.},
      volume={24},
      number={3-4},
       pages={235\ndash 265},
        note={Computational algebra and number theory (London, 1993)},
}

\bib{MR0376533}{book}{
      editor={Birch, B.~J.},
      editor={Kuyk, W.},
       title={Modular functions of one variable. {IV}},
      series={Lecture Notes in Mathematics, Vol. 476},
   publisher={Springer-Verlag},
     address={Berlin},
        date={1975},
}

\bib{1104.4641}{article}{
   author={Bilu, Yuri},
   author={Parent, Pierre},
   author={Rebolledo, Marusia},
   title={Rational points on $X^+_0(p^r)$},
   journal={Ann. Inst. Fourier (Grenoble)},
   volume={63},
   date={2013},
   number={3},
   pages={957--984},
}

\bib{MR2118760}{article}{
      author={Cojocaru, Alina~Carmen},
       title={On the surjectivity of the {G}alois representations associated to
  non-{CM} elliptic curves},
        date={2005},
     journal={Canad. Math. Bull.},
      volume={48},
      number={1},
       pages={16\ndash 31},
        note={With an appendix by Ernst Kani},
}

\bib{cremona}{unpublished}{
  author={Cremona, J.~E.},
  title={Elliptic Curve Data \emph{(webpage)}},
  note={\url{http://johncremona.github.io/ecdata/}},
}

\bib{MR2995149}{article}{
   author={Dokchitser, Tim},
   author={Dokchitser, Vladimir},
   title={Surjectivity of mod $2^n$ representations of elliptic curves},
   journal={Math. Z.},
   volume={272},
   date={2012},
   number={3-4},
   pages={961--964},
}

\bib{elkies2006elliptic}{article}{
	author={Elkies, N.D.},
	title={{Elliptic curves with $3$-adic Galois representation surjective mod $3$ but not mod $9$}},
	date={2006},
	note={\href{http://arxiv.org/abs/math/0612734}{arXiv:math/0612734} [math.NT]},
}

\bib{MR1677158}{article}{
      author={Halberstadt, Emmanuel},
       title={Sur la courbe modulaire {$X_{\text{nd\'ep}}(11)$}},
        date={1998},
     journal={Experiment. Math.},
      volume={7},
      number={2},
       pages={163\ndash 174},
}

\bib{MR1080288}{article}{
      author={Kraus, Alain},
       title={Sur le d\'efaut de semi-stabilit\'e des courbes elliptiques \`a
  r\'eduction additive},
        date={1990},
     journal={Manuscripta Math.},
      volume={69},
      number={4},
       pages={353\ndash 385},
}

\bib{MR1360773}{article}{
      author={Kraus, Alain},
       title={Une remarque sur les points de torsion des courbes elliptiques},
        date={1995},
     journal={C. R. Acad. Sci. Paris S\'er. I Math.},
      volume={321},
      number={9},
       pages={1143\ndash 1146},
}

\bib{MR482230}{article}{
      author={Mazur, B.},
       title={Rational isogenies of prime degree (with an appendix by {D}.
  {G}oldfeld)},
        date={1978},
     journal={Invent. Math.},
      volume={44},
      number={2},
       pages={129\ndash 162},
}

\bib{MR1209248}{article}{
      author={Masser, D.~W.},
      author={W{\"u}stholz, G.},
       title={Galois properties of division fields of elliptic curves},
        date={1993},
     journal={Bull. London Math. Soc.},
      volume={25},
      number={3},
       pages={247\ndash 254},
}

\bib{MR0387283}{article}{
      author={Serre, Jean-Pierre},
       title={Propri\'et\'es galoisiennes des points d'ordre fini des courbes
  elliptiques},
        date={1972},
     journal={Invent. Math.},
      volume={15},
      number={4},
       pages={259\ndash 331},
}

\bib{MR644559}{article}{
      author={Serre, Jean-Pierre},
       title={Quelques applications du th\'eor\`eme de densit\'e de
  {C}hebotarev},
        date={1981},
     journal={Inst. Hautes \'Etudes Sci. Publ. Math.},
      number={54},
       pages={323\ndash 401},
}

\bib{MR1484415}{book}{
      author={Serre, Jean-Pierre},
       title={Abelian {$l$}-adic representations and elliptic curves},
      series={Research Notes in Mathematics},
   publisher={A K Peters Ltd.},
     address={Wellesley, MA},
        date={1998},
      volume={7},
        note={With the collaboration of Willem Kuyk and John Labute, Revised
  reprint of the 1968 original},
}

\bib{MR1291394}{book}{
      author={Shimura, Goro},
       title={Introduction to the arithmetic theory of automorphic functions},
      series={Publications of the Mathematical Society of Japan},
   publisher={Princeton University Press},
     address={Princeton, NJ},
        date={1994},
      volume={11},
        note={Reprint of the 1971 original, Kan{\^o} Memorial Lectures, 1},
}

\bib{MR1312368}{book}{
      author={Silverman, Joseph~H.},
       title={Advanced topics in the arithmetic of elliptic curves},
      series={Graduate Texts in Mathematics},
   publisher={Springer-Verlag},
     address={New York},
        date={1994},
      volume={151},
}

\bib{MR2514094}{book}{
      author={Silverman, Joseph~H.},
       title={The arithmetic of elliptic curves},
     edition={Second},
      series={Graduate Texts in Mathematics},
   publisher={Springer},
     address={Dordrecht},
        date={2009},
      volume={106},
}

\bib{stein-web}{unpublished}{
  author={Stein, W.\thinspace{}A.},
  title={Images of Galois \emph{(webpage)}},
  note={\url{https://wstein.org/Tables/surj/}},
}

\bib{MR894516}{article}{
      author={Sturm, Jacob},
       title={On the congruence of modular forms},
        date={1987},
        conference={ title={Number theory}, address={{N}ew {Y}ork}, date={1984--1985}, }
          book={ series={Lecture Notes in Math.}, volume={1240}, publisher={Springer}, place={Berlin}, },
       pages={275--280},
}

\bib{MR2869209}{article}{
      author={Schoof, Ren{\'e}},
      author={Tzanakis, Nikos},
       title={Integral points of a modular curve of level 11},
        date={2012},
     journal={Acta Arith.},
      volume={152},
      number={1},
       pages={39\ndash 49},
}

\bib{Zywina-images}{article}{
   author={Zywina, David},
   title={The possible images of the mod $\ell$ representations associated to elliptic curves over $\QQ$},
   note={preprint},
   date={2015}
}

\end{biblist}
\end{bibdiv}

\end{document}